 \newcommand{\resp}{{\it resp.} }
\newcommand{\cf}{{\it cf.} }
\newcommand{\ie}{{\it i.e.} }
\newcommand{\eg}{{\it e.g.} }
\newcommand{\loccit}{{\it loc. cit.} }
\renewcommand{\qed}{\hfill$\Box$\medskip}
\newcommand{\sB}{\mathcal{B}}
\newcommand{\sK}{\mathcal{K}}
\newcommand{\Q}{\mathbf{Q}}
\newcommand{\Z}{\mathbf{Z}}
\newcommand{\N}{\mathbf{N}}
\newcommand{\R}{\mathbf{R}}
 \newcommand{\inj}{\hookrightarrow}
 \newcommand{\e}{\frac{1}{p^\infty}}
 \newcommand{\f}{-\frac{1}{p^\infty}}
\newcommand{\s}{\sm o}
\renewcommand{\ker}{\rm{ker}\,}
\newcommand{\Spec}{\rm{Spec}\,}
\renewcommand{\epsilon}{\varepsilon}
\font\sm=cmr10 at9pt
 \newtheorem{thm}{Th\'eor\`eme}[subsection]
 \newtheorem{lemma}[thm]{Lemme}
\newtheorem{prop}[thm]{Proposition}
\newtheorem{cor}[thm]{Corollaire}
\newtheorem{conj}[thm]{Conjecture}
\newtheorem{defn}[thm]{Definition}
\font\sm=cmr9 at6pt
\begin{document}

 \title[Conjecture du facteur direct]{La conjecture du facteur direct}

\author{Yves
Andr\'e}

\address{Institut de Math\'ematiques de Jussieu\\  4 place Jussieu, 75005
Paris\\France.}
\email{yves.andre@imj-prg.fr}
   \date{23 ao\^ut 2016}
\keywords{{Direct summand conjecture, big Cohen-Macaulay algebra, perfectoid algebra, purity}} \subjclass{13D22, 13H05, 14G20}

   \bigskip 
   
   \bigskip 
    
  \medskip \begin{abstract} M. Hochster a conjectur\'e que pour toute extension finie $S$ d'un anneau commutatif r\'egulier $R$, la suite exacte de $R$-modules $0\to R \to S \to S/R\to 0$ est scind\'ee. En nous appuyant sur sa r\'eduction au cas d'un anneau local r\'egulier $R$ complet non ramifi\'e d'in\'egale caract\'eristique, nous proposons une d\'emonstration de cette conjecture dans le contexte de la th\'eorie perfecto\"{\i}de de P. Scholze. Les deux ingr\'edients-cl\'e sont le {``lemme d'Abhyankar"} perfecto\"{\i}de et l'analyse des extensions kumm\'eriennes de $R$ par une technique d'\'epaississement sur des voisinages tubulaires. 
    
  \bigskip
\noindent{\sm{ABSTRACT.}} M. Hochster conjectured that any finite extension of a regular commutative ring splits as a module. Building on his reduction to the case of an unramified complete regular local ring $R$ of mixed characteristic, we propose a proof in the framework of P. Scholze's perfectoid theory.  The main ingredients are the perfectoid ``Abhyankar lemma" and an analysis of Kummer extensions of $R$ by a thickening technique.
  \end{abstract}

     \bigskip 
        \bigskip  
   \maketitle
  \let\languagename\relax

\tableofcontents

 \newpage
 
     \begin{sloppypar}

      \section*{Introduction.} 
 
  \subsection{}\label{fd}   La conjecture du facteur direct, publi\'ee par M. Hochster en 1973 \cite{H1}, est l'\'enonc\'e suivant, d'apparence \'el\'ementaire:

\begin{conj}\label{conj} Soit $R$ un anneau commutatif noeth\'erien \emph{r\'egulier}. Alors pour toute $R$-alg\`ebre commutative fid\`ele et \emph{finie} $S$, l'inclusion $R\inj S$ admet une \emph{r\'etraction} $R$-lin\'eaire, \ie $R$ est facteur direct de $S$ en tant que $R$-module.  \end{conj}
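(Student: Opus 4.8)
The plan is to run M.\ Hochster's classical reductions, then use the perfectoid Abhyankar lemma to almost-trivialise the covers that arise, and finally to descend.

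First, I would invoke the reductions due to Hochster: it suffices to treat the case in which $R$ is a \emph{complete, unramified regular local ring of mixed characteristic $(0,p)$}, so that by Cohen's structure theorem $R\cong W(k)[[x_2,\dots,x_d]]$, with $W(k)$ the ring of Witt vectors of a perfect field $k$ of characteristic $p$ and $x_1:=p,x_2,\dots,x_d$ a regular system of parameters, $\mathfrak m=(x_1,\dots,x_d)$; and in which $S$ is a module-finite extension that is a normal domain. Over the regular (hence Gorenstein) complete ring $R$, local and Matlis duality show that $R\inj S$ splits \emph{if and only if} the induced map $H^d_{\mathfrak m}(R)\to H^d_{\mathfrak m}(S)$ is injective, $d=\dim R$; equivalently, via $H^d_{\mathfrak m}(M)=\varinjlim_t M/(x_1^t,\dots,x_d^t)M$, the socle class of $H^d_{\mathfrak m}(R)$ does not die in $H^d_{\mathfrak m}(S)$, that is, $(x_1\cdots x_d)^t\notin(x_1^{t+1},\dots,x_d^{t+1})S$ for every $t$. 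So it is enough to embed $S$, as an $R$-algebra, into an $R$-algebra $B$ with $H^d_{\mathfrak m}(R)\to H^d_{\mathfrak m}(B)$ injective --- a big Cohen--Macaulay $R$-algebra would do; but I expect only an \emph{almost} Cohen--Macaulay algebra (in Faltings' sense) to be available, which is why the ``almost'' will have to be removed at the very end.

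Second, I would bring $S$ into a Kummer / almost-\'etale shape and surround it with a perfectoid ring. Since the generic point of $\Spec R$ has characteristic $0$, the extension of fraction fields is finite separable, so there is a nonzero $g\in\mathfrak m$ --- for instance a nonzero multiple of the discriminant, enlarged so that $p$ divides $g$ --- with $R[1/g]\to S[1/g]$ finite \'etale. I would then construct a perfectoid $R$-algebra $R_\infty$, containing $R$ together with compatible $p$-power roots $p^{1/p^\infty},x_2^{1/p^\infty},\dots,x_d^{1/p^\infty}$ \emph{and} $g^{1/p^\infty}$, flat over $R$ modulo every power of $p$, and carrying $x_1,\dots,x_d$ as a regular sequence --- morally, the $p$-adic completion of a perfectoid polydisc $W(k)[p^{1/p^\infty}]\langle x_2^{1/p^\infty},\dots,x_d^{1/p^\infty}\rangle$, modified so as to see $g^{1/p^\infty}$. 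From the flatness one gets that $H^d_{\mathfrak m}(R)\to H^d_{\mathfrak m}(R_\infty)$ is injective, and one notes that $H^d_{\mathfrak m}(R)$ is $p$-divisible.

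Third, I would apply the perfectoid Abhyankar lemma. Because $R_\infty$ contains all $p$-power roots of the branch element $g$ (and of $p$), the finite \'etale $R[1/g]$-algebra $S[1/g]$ becomes, after base change, finite \'etale over the Tate perfectoid ring $R_\infty[1/g]$, and the lemma shows that the integral closure $S_\infty$ of $R_\infty$ in $(S\otimes_R R_\infty)[1/g]$ is \emph{almost finite \'etale} over $R_\infty$ with respect to a suitable idempotent ideal $\mathfrak a$ generated by fractional powers of $pg$. Hence $S_\infty$ is almost perfectoid and almost finite projective over the ``regular'' ring $R_\infty$, so almost Cohen--Macaulay, and its almost trace form furnishes an almost retraction $S_\infty\to R_\infty$. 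Consequently $H^d_{\mathfrak m}(R_\infty)\to H^d_{\mathfrak m}(S_\infty)$ has kernel annihilated by $\mathfrak a$; combining this with the injectivity of $H^d_{\mathfrak m}(R)\to H^d_{\mathfrak m}(R_\infty)$, the $p$-divisibility of $H^d_{\mathfrak m}(R)$, and a careful colimit-and-$p$-adic bookkeeping to clear the ``almost'', I would conclude that $H^d_{\mathfrak m}(R)\to H^d_{\mathfrak m}(S_\infty)$ is honestly injective. Since there is an $R$-algebra map $S\to S_\infty$ (every element of $S$ is integral over $R_\infty$), the map $H^d_{\mathfrak m}(R)\to H^d_{\mathfrak m}(S)$ is injective, and therefore $R\inj S$ splits.

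The main obstacle, I expect, is the construction and control of $R_\infty$ when $g$ is \emph{not} a coordinate: adjoining $p$-power roots of an arbitrary element of a perfectoid ring and $p$-completing need not yield a perfectoid --- or even a reduced --- ring, so the Abhyankar lemma cannot be applied naively. This is exactly what the \emph{thickening over tubular neighbourhoods} technique is for: one passes to a formal or tubular neighbourhood of the divisor $V(g)$ inside a perfectoid ambient space, where $g$ becomes comparable to a coordinate up to the relevant almost-isomorphism, runs the Kummer / Abhyankar step there, and then estimates the error introduced by the thickening. Closely intertwined with this is the final descent --- passing from the \emph{almost} splitting over $R_\infty$ to an \emph{honest} $R$-linear retraction of the original finite extension --- which is precisely where the regularity of $R$ is used, through the $p$-divisibility of $H^d_{\mathfrak m}(R)$ and the flatness of $R\to R_\infty$ modulo powers of $p$, to absorb the almost-mathematical error.
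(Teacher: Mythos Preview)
Your proposal follows essentially the same architecture as the paper: Hochster's reduction to $A=W(k)[[T_1,\dots,T_n]]$, construction of a perfectoid tower containing $p^{1/p^\infty}$, $T_i^{1/p^\infty}$ and $g^{1/p^\infty}$, application of the perfectoid Abhyankar lemma over that tower, and a Noetherian descent back to $A$. You also correctly single out the hard point: adjoining $g^{1/p^\infty}$ to a perfectoid ring does not naively give something perfectoid or flat, and this is where the thickening over tubular neighbourhoods of $T=g$ enters (the paper's Th.~\ref{T3}).

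Two differences deserve comment. First, your descent is phrased via the injectivity of $H^d_{\mathfrak m}(R)\to H^d_{\mathfrak m}(S)$, whereas the paper works with the extension class $e\in\operatorname{Ext}^1_{A/p^m}(\bar B/\bar A,\bar A)$ of $0\to \bar A\to \bar B\to \bar B/\bar A\to 0$ and shows $e=0$ for each $m$, then invokes Hochster's Mittag--Leffler argument to produce a global retraction. These formulations are dual and either can be made to work. Second, and more substantively, your proposed mechanism for clearing the ``almost'' --- the $p$-divisibility of $H^d_{\mathfrak m}(R)$ --- is not the right lever. The almost ideal coming out of the Abhyankar lemma is generated by the $(pg)^{1/p^j}$, not by the $p^{1/p^j}$ alone, and $p$-divisibility does not by itself kill $(pg)^{1/p^\infty}$-torsion. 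The paper's argument is a clean Noetherian trick (Lemma~\ref{L2}): over the faithfully flat $A$-algebra $\hat A_{\infty\infty!!}^{\s}/p^m$, the annihilator of $e\otimes 1$ contains the \emph{idempotent} ideal $g^{1/p^\infty}K_\infty^{\s\s}$, which meets $A/p^m$ nontrivially (it contains the class of $pg$); Krull's intersection theorem then forces $\operatorname{Ann}_{A/p^m}(e)=A/p^m$, i.e.\ $e=0$. If you run your local-cohomology version, the same lemma applied to $\operatorname{Ann}_A(\eta)$ for $\eta$ in the kernel is what actually closes the argument, not divisibility.

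One further point to keep straight: there are \emph{two} almost settings in play, and the paper separates them. The flatness of the Kummer tower (Th.~\ref{T3}) is an almost statement for the ideal $K_\infty^{\s\s}$; the paper upgrades it to honest faithful flatness over $A$ by passing to $\hat A_{\infty\infty!!}^{\s}$ via the functor $(\ )^a_{!!}$, so that one has a genuinely faithfully flat base to which Lemma~\ref{L2} applies. The Abhyankar step (Th.~\ref{T4}) then introduces the second, larger almost ideal $(pg)^{1/p^\infty}$. Your sketch collapses these into a single ``almost'' and a single clearing step; when you flesh it out, you will need both moves.
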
 

Il revient au m\^eme de dire que toute extension finie $R\inj S$ est {\it pure}, \ie universellement injective,
 ou encore que $S$ est un {g\'en\'erateur} de la cat\'egorie des $R$-modules.
  
 Cette conjecture occupe une place centrale dans l'\'echeveau des {``conjectures homologiques"}, issues des travaux de C. Peskine et L. Szpiro \cite{PS}, et qui structurent la vision sous-jacente \`a  bien des travaux d'alg\`ebre commutative depuis une quarantaine d'ann\'ees \cite{H4}. 
Le r\'esultat suivant, qui synth\'etise les travaux de plusieurs auteurs, donne un aper\c cu de son caract\`ere prot\'eiforme.  
 
 \smallskip {\it $\;\;\;\;\;\;\;\;$ Les \'enonc\'es suivants sont \'equivalents\footnote{Une suite s\'ecante maximale est un syst\`eme de $(\dim R)$ g\'en\'erateurs d'un id\'eal dont le radical est l'id\'eal maximal (``system of parameters" en anglais). Le rang d'un module de dimension projective finie (sur un anneau local) est la somme altern\'ee des rangs dans une r\'esolution libre. 
 
 Pour l'\'equivalence $(1)\Leftrightarrow (3)$, voir \cite[th. 6.1]{H2}.  
  L'implication $(1)\Rightarrow (2)$ est \'el\'ementaire: la puret\'e de l'extension $S$ entra\^{\i}ne que $R/I \to S/IS$ est injectif; pour la r\'eciproque, voir \cite{D}. Pour $(3)\Rightarrow (4)$, voir \cite{H2}, et \cite{Du} pour la r\'eciproque. L'implication $(4)\Rightarrow (5)$, implicite dans \cite{EG}, est explicit\'ee dans \cite{H2}. }:
   \begin{enumerate} 
 \item La conjecture du facteur direct vaut pour tout anneau r\'egulier.
 \item Pour tout id\'eal $I$ d'un anneau r\'egulier $R$ et toute extension {finie} $S$, $I = R \cap IS$. 
 \item Pour tout anneau local (noeth\'erien) $R$,
  toute suite s\'ecante maximale $(x_1,  \ldots, x_d) \,$ et tout couple $(m,n)\in \N^2$, le mon\^ome $(x_1\cdots x_d)^m$ est dans l'id\'eal engendr\'e par $\,x_1^n, \ldots , x_d^n\,$ si et seulement si $\, m\geq n$.
 \item Pour tout anneau local $R$ d'id\'eal maximal $\frak m$, et tout complexe de modules libres de type fini $0\to F_d \to \cdots \to F_0\to 0$ tel que $H_i(F_\bullet)$ soit de longueur finie pour $i>0$ et que $H_0(F_\bullet)\setminus \frak m H_0(F_\bullet)$ ait de la torsion $\frak m$-primaire,
   on a $\dim\, R\leq d$.
  
  \medskip Ils impliquent la conjecture des syzygies: \medskip
  
  \item Tout $k$-i\`eme module de syzygies $M$ (d'un module de type fini sur un anneau local), qui est de dimension projective finie mais non libre, est de rang $\geq k$.  
   \end{enumerate} }

  \subsection{}\label{t1}  La conjecture du facteur direct est un probl\`eme local sur $R$ mais pas sur l'extension $S$ (probl\`eme de recollement des r\'etractions). Si le degr\'e de l'extension est inversible, il est facile de construire une r\'etraction \`a l'aide d'une trace divis\'ee, ce qui \'etablit la conjecture lorsque $R$ contient $\Q$. C'est encore facile si l'extension finie est plate  (donc fid\`element plate, de sorte que la suite exacte $0\to R \to S \to S/R\to 0$ se scinde), ce qui \'etablit la conjecture en dimension $\leq 2$, puisque toute extension finie normale de $R$ est alors plate. Le cas beaucoup plus ardu de la dimension $3$ a \'et\'e r\'esolu par R. Heitman \cite{He}.
  
  Par ailleurs, sans l'hypoth\`ese de r\'egularit\'e, il est ais\'e de trouver des contre-exemples:
    l'id\'eal $(x+y)$ de $R= K[x,y]/(xy)$ n'est le contract\'e d'aucun id\'eal du normalis\'e $K[x]\times K[y]$; la normalit\'e ne suffirait pas, d'ailleurs \cite[ex. 1]{H1}.
 
 \smallskip Hochster a d\'emontr\'e la conjecture en caract\'eristique $p>0\,$  
 (voir \ref{astH} ci-dessous), et a {\it ramen\'e le cas g\'en\'eral au cas d'un anneau local complet non ramifi\'e d'in\'egale caract\'eristique  $(0,p)$ de corps r\'esiduel ${k}$ parfait}, c'est-\`a-dire, en vertu du th\'eor\`eme de structure de Cohen, au cas d'un anneau de s\'eries formelles $W({k})[[T_1,\ldots, T_n]]$ \`a coefficients dans l'anneau de Witt de ${k}\,$ \cite[th. 6.1]{H2}.

 \newpage \medskip L'objectif de cet article est de la d\'emontrer en g\'en\'eral, via cette r\'eduction:
  
\begin{thm}\label{T1}  La conjecture du facteur direct est vraie pour $W({k})[[T_1,\ldots, T_n]]$  (donc aussi pour tout anneau r\'egulier $R$). \end{thm}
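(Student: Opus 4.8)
The strategy is to produce, for a given finite extension $R \inj S$ with $R = W(k)[[T_1,\ldots,T_n]]$, a big Cohen–Macaulay $R$-algebra through which the extension factors, and then invoke the standard fact that the existence of such an algebra forces the splitting. Concretely, I would first reduce to the case where $S$ is a normal domain, finite and generically étale over $R$, by replacing $S$ with the normalization of $R$ in a suitable finite field extension; one may also assume (after a Noether-type normalization and adjusting the $T_i$) that the branch locus of $\Spec S \to \Spec R$ is contained in the hypersurface $\{T_1 \cdots T_n \cdot p = 0\}$, i.e. it is contained in a normal crossings divisor. The point of this reduction is to bring the situation into the perfectoid world: the complement of this divisor, suitably completed, will be (an affinoid piece of) a perfectoid space once we adjoin $p$-power roots of $p$ and of the $T_i$.

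**Construction of the perfectoid tower.** Next I would build the perfectoid $R$-algebra $R_\infty$ obtained by $p$-adically completing $R[p^{1/p^\infty}, T_1^{1/p^\infty}, \ldots, T_n^{1/p^\infty}]$ and passing to its perfectoid completion; this is the perfectoid analogue of the ``absolute integral closure'' restricted to the Kummer tower along the coordinate hyperplanes. The key geometric input is the perfectoid Abhyankar lemma: because the map $\Spec S[1/(pT_1\cdots T_n)] \to \Spec R[1/(pT_1\cdots T_n)]$ is finite étale, after base change to $R_\infty[1/(pT_1\cdots T_n)]$ it becomes a finite étale cover of a perfectoid algebra, hence is itself perfectoid (up to completion), and — crucially — the integral closure $S_\infty$ of $R_\infty$ in it is ``almost'' finite étale over $R_\infty$ with respect to the ideal generated by $p^{1/p^\infty}$ and the $(T_i)^{1/p^\infty}$. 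This is where the tubular-neighbourhood / thickening analysis of Kummer extensions enters: one controls the ramification of $S_\infty$ over $R_\infty$ by working on formal tubular neighbourhoods of the components of the normal crossings divisor, showing that the ramification becomes ``as small as a fixed power of $p$ times a power of $T_i$'' after going up the tower, and in the limit becomes almost-zero.

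**From almost splitting to actual splitting.** With $S_\infty$ almost finite étale over the perfectoid $R_\infty$, the almost purity theorem (Scholze/Faltings) gives that $R_\infty \inj S_\infty$ almost splits, i.e. the map $\Hom_{R_\infty}(S_\infty, R_\infty) \to R_\infty$ (evaluation at $1$) has image containing the almost-unit ideal $\mathfrak m_\infty = (p, T_1,\ldots,T_n)^{1/p^\infty}$. Now restrict scalars along the original $R \inj R_\infty$: the composite $R \inj S \inj S_\infty$, together with an almost retraction $S_\infty \to R_\infty$, shows that the cokernel of $R \to S$ is killed, after tensoring with $R_\infty$, by every element of $\mathfrak m_\infty$. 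The final — and I expect hardest — step is to descend this ``almost splitting up in the perfectoid cover'' to a genuine splitting of $R \inj S$ as $R$-modules. This is not formal: $R_\infty$ is highly non-Noetherian and far from faithfully flat over $R$. The mechanism (exactly as in Hochster–Huneke's deduction of the direct summand conjecture from the existence of big Cohen–Macaulay algebras, via the ``equational'' characterization (3) in the introduction, or alternatively via a local-cohomology argument) is that $S_\infty$, or rather a big Cohen–Macaulay $R$-algebra built from it, is enough: one shows the canonical map $H^{n+1}_{\mathfrak m}(R) \to H^{n+1}_{\mathfrak m}(S_\infty)$ is injective (this uses that the system of parameters $p, T_1,\ldots, T_n$ becomes a regular sequence, up to $\mathfrak m_\infty$-almost issues, in $S_\infty$, which is precisely the output of the almost-purity controlled ramification above), and then that $R \to S$ must split because $R$ is regular and a parameter ideal's local cohomology behaves well. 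I would therefore organize the write-up as: (i) reductions to the normal-crossings branch locus case; (ii) construction of $R_\infty$ and the perfectoid Abhyankar lemma; (iii) the thickening/tubular-neighbourhood control of ramification of $S_\infty/R_\infty$; (iv) almost purity and almost splitting; (v) passage from almost to exact via big Cohen–Macaulay formalism and the regularity of $R$. The principal obstacle is step (iii)–(v): making the ramification estimates uniform enough in the tower that the ``almost'' error genuinely vanishes in the relevant cohomology, and then cleanly extracting an honest $R$-linear retraction rather than merely an almost one.
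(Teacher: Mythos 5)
Your step (i) contains a genuine gap that undercuts the rest of the argument. You claim that ``after a Noether-type normalization and adjusting the $T_i$'' one may assume the branch locus of $\Spec S \to \Spec R$ is contained in the normal crossings divisor $\{p\,T_1\cdots T_n = 0\}$. This is false in general: the branch locus is a fixed divisor $V(g)\subset \Spec R$ with $g\in A=W(k)[[T_1,\dots,T_n]]$ arbitrary, and no $W(k)$-linear change of coordinates (nor any Noether normalization, which would only change the presentation of $S$ over $A$, not the divisor) turns an arbitrary $g$ -- say an irreducible element defining a singular hypersurface -- into a monomial $T_1\cdots T_n$. Embedded resolution of $V(g)$ would require replacing $\Spec A$ by a blow-up, i.e.\ changing $R$, which is not available since the statement to prove is precisely about $R$. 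As the paper itself points out, Bhatt's argument (almost purity \`a la Faltings over $R_\infty = $ completion of $R[\zeta_{p^\infty}, T_i^{1/p^\infty}]$) already proves the conjecture when $B[1/p]$ is \'etale over $A[1/p]$, and ``l'argument s'\'etend d'ailleurs au cas o\`u $B[1/p]$ n'est ramifi\'e qu'au-dessus de $T_1\cdots T_n=0$.'' Your proposal is, modulo the choice of $p^{1/p^\infty}$ versus $\zeta_{p^\infty}$ in the ground field, exactly this prior result; the reduction you posit would make the hard case disappear, but it does not hold.

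The whole point of the paper is to handle an arbitrary discriminant $g$. To do so it enlarges the tower further, adjoining $p^\infty$-th roots of $g$ itself to form $\hat A_{\infty\infty}^{\circ}$, and then faces two nontrivial problems that your sketch does not engage: (a) showing that $\hat A_{\infty\infty}^{\circ}$ is almost faithfully flat over the decompleted $A_{\infty 0}^{\circ}$ (Theorem \ref{T3}, where the tubular-neighbourhood / thickening analysis actually lives -- applied to the locus $T=g$ in $\hat A_{\infty 0}\langle T^{1/p^\infty}\rangle$, not to the coordinate hyperplanes), and (b) a genuine ``perfectoid Abhyankar lemma'' (Theorem \ref{T4}) in a non-valuative almost framework $(K_\infty^{\circ}[T^{1/p^\infty}],\, T^{1/p^\infty}K_\infty^{\circ\circ}[T^{1/p^\infty}])$ which controls ramification along $V(g)$ rather than along a normal crossings divisor. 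Your step (v) (descending from an almost splitting over the perfectoid cover to an honest retraction of $R\inj S$) is also resolved differently in the paper -- not by local cohomology and big Cohen--Macaulay algebras, but by showing that the extension class $e$ of $0\to A/p^m\to B/p^m\to B/A\otimes\Z/p^m\to 0$ is killed by $g^{1/p^\infty}K_\infty^{\circ\circ}$ after base change to the flat algebra $\hat A_{\infty\infty!!}^\circ/p^m$, then invoking Lemma \ref{L2} to conclude $e=0$, and finally Hochster's Mittag--Leffler argument to pass from all $m$ to a single retraction -- though your alternative via big Cohen--Macaulay algebras is indeed viable (it is Theorem \ref{T2}). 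The essential missing idea, without which your proof cannot be completed, is the treatment of a general discriminant via the enlarged tower $\hat A_{\infty\infty}^\circ$ and the associated flatness and almost-purity theorems.
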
 
 
  Nous ferons usage de techniques {``transcendantes"} issues de la th\'eorie de Hodge $p$-adique,  quittant d\'elib\'er\'ement le monde noeth\'erien o\`u l'alg\`ebre commutative est diserte, pour le non-noeth\'erien o\`u elle ann\^one.

  \subsection{}\label{carp} Expliquons la strat\'egie de la preuve dans le cas analogue, mais beaucoup plus simple, o\`u 
 $\,R=  {k}[[T_0,\ldots, T_n]]\,$  et o\`u l'extension finie $\,S\,$ de $\,R\,$ est int\`egre et munie d'un groupe fini $\,G\,$ d'automorphismes tel que $\,S^G=R$. 
  
  La trace $\,{\rm{tr}}:\, S\to R\, $ donn\'ee par la somme des conjugu\'es est non nulle, car l'extension des corps de fractions est galoisienne; n\'eanmoins $\,{\rm{tr}}\,$ n'est pas surjective en g\'en\'eral,  donc est impropre \`a fournir une r\'etraction de $\,R\inj S\,$ si $\,p\,$ divise l'ordre de $G$.
 
 La situation s'am\'eliore en passant aux cl\^otures parfaites: l'extension des corps de fractions demeure galoisienne de groupe $G$, mais l'id\'eal non nul ${\rm{tr}}(S^{1/ p^\infty})$ de $R^{1/ p^\infty}$ devient radiciel: ${\rm{tr}} (S^{1/ p^\infty})= ({\rm{tr}} (S^{1/ p^\infty}))^{1/ p^\infty}$. En particulier, ${\rm{tr}} (S^{1/ p^\infty})$ n'est pas contenu dans l'id\'eal $(T_0, T_1, \ldots ,  T_n)R^{1/ p^\infty}$.
  Comme $R^{1/ p^\infty}$ est libre sur $\,R\,$ de base $(T_0^{m_0}\cdots  T_n^{m_n})_{{\underline m}\in (\Z[\frac{1}{p}]\cap [0,1[)^{n+1}}$,  on en d\'eduit l'existence d'un \'el\'ement $\,s \in S^{1/ p^\infty}\,$ et d'un indice $\,\underline m\,$ tels que la projection de  ${\rm{tr}} (s)$ sur le facteur $\,R\,$ index\'e par $\,\underline m\,$ soit \'egal \`a $1$.  En composant les applications $R$-lin\'eaires $\,S\to S^{1/ p^\infty} \stackrel{s'\mapsto {\rm{tr}}(ss') }{\to } R^{1/ p^\infty} \stackrel{{\rm{pr}}_{\underline{m}}}{\to} R$, on obtient la r\'etraction cherch\'ee.

   \subsection{} En in\'egale caract\'eristique, o\`u $W({k})$  se substitue \`a  ${k}[[T_0]]$, la th\'eorie de Hodge $p$-adique sugg\`ere une approche analogue, en rempla\c cant cl\^otures radicielles par {\it extensions profond\'ement ramifi\'ees}. Dans le cas de $A:= W({k})[[T_1,\ldots, T_n]]$, on peut par exemple consid\'erer $ \hat A_\infty^{\s} := W(k[[T_1^{\e},\ldots, T_n^{\e}]])\hat\otimes_{W(k)} \hat K_\infty^{\s}$, o\`u $\hat K_\infty^{\s}$ est le compl\'et\'e de la $\Z_p$-extension cyclotomique $K_\infty^{\s}= W(k)[\zeta_{p^\infty}]$ de $W({k})$. 
     
 \smallskip  Cette id\'ee a d\'ej\`a \'et\'e explor\'ee par plusieurs auteurs, dont P. Roberts, puis K. Shimomoto, B. Bhatt, O. Gabber et L. Ramero; elle a notamment permis \`a Bhatt  \cite{Bh} de prouver la conjecture du facteur direct dans le cas o\`u $B[\frac{1}{p}]$ est \'etale sur $A[\frac{1}{p}]$: dans ce cas, le th\'eor\`eme de {``presque-puret\'e"} de Faltings implique en effet que l'anneau des entiers $(B\otimes_A  \hat A_\infty)^{\s}$ est presque pur sur $\hat A_\infty^{\s}$, et un argument noeth\'erien permet de passer de l\`a \`a la puret\'e sur $A$. L'argument s'\'etend d'ailleurs au cas o\`u $B[\frac{1}{p}]$  n'est ramifi\'e qu'au-dessus de $T_1 \cdots T_n =0$.
 
   \subsection{}  Pour le cas g\'en\'eral, le th\'eor\`eme de Faltings s'av\`ere insuffisant. Nous le remplacerons par le {``lemme d'Abhyankar"} perfecto\"{\i}de de \cite{A}, qui permet de traiter le cas o\`u $B[\frac{1}{p}]$ est ramifi\'e sur $A[\frac{1}{p}]$ le long d'un discriminant $g\in A $ quelconque. 
   
  Ce r\'esultat affirme entre autre que {\it quitte \`a adjoindre les racines $p^\infty$-i\`emes de $g$ et prendre une fermeture [compl\`etement] int\'egrale}, {\it l'extension des anneaux d'entiers devient {``presque"} presque \'etale finie modulo toute puissance de $p$ - {``presque"}} \'etant entendu au sens o\`u l'on {``n\'eglige"} tout ce qui est annul\'e par $(\zeta_{p^j}-1)g^{\frac{1}{p^j}}$ pour tout $j$.  Plus pr\'ecis\'ement, la fermeture int\'egrale $\sB^{\s}$ de $g^{\f} \hat A_\infty\langle g^{\e}\rangle^{\s} $ dans $B\otimes_A  \hat A_\infty\langle g^{\e}\rangle[\frac{1}{g}]$ est {\it presque} \'etale finie et pure sur $ \hat A_\infty\langle g^{\e}\rangle^{\s}$ modulo toute puissance de $p$ (th. \ref{T4}).   
   
   La presque-alg\`ebre intervient ici dans un cadre in\'edit o\`u l'id\'eal idempotent n'est pas un id\'eal de valuation. On a par ailleurs remplac\'e $ \hat A_\infty^{\s}$ par $ \hat A_\infty\langle g^{\e}\rangle^{\s}$ et, pour conclure, on a besoin de propri\'et\'es de puret\'e ou de platitude de $ \hat A_\infty^{\s}\to \hat A_\infty\langle g^{\e}\rangle^{\s}$. 
   
    \subsection{} L'\'etude des extensions kumm\'eriennes $A[\zeta_{p^i}, g^{\frac{1}{p^i}}, {\frac{1}{p}}]^{\s}$ de $A$ est notoirement difficile: on sait que l'anneau des entiers de $A[g^{\frac{1}{p}}]$  (\resp $A[g^{\frac{1}{p^i}}]$) est pur sur $A$ mais, apr\`es adjonction d'une racine $p$-i\`eme de l'unit\'e, pas n\'ecessairement plat sur $A$ \cite{Ko} (\resp \cite{R}).   Notre approche consistera \`a travailler avec $\hat A_\infty\langle g^{\e}\rangle^{\s}$ directement, en l'{``\'epaississant "}, c'est-\`a-dire en la voyant comme colimite compl\'et\'ee-s\'epar\'ee d'alg\`ebres de fonctions born\'ees sur des voisinages tubulaires de $\,T=g\,$ dans le spectre analytique de $\hat A_\infty\langle T^{\e}\rangle $, et en exploitant le caract\`ere perfecto\"{\i}de de telles alg\`ebres. En r\'esum\'e, $ \hat A_\infty\langle g^{\e}\rangle^{\s}$ est {\it presque} fid\`element plate sur une certaine d\'ecompl\'etion $ A_\infty^{\s}$ de $ \hat A_\infty^{\s}$, {``presque "} \'etant entendu ici au sens o\`u l'on n\'eglige ce qui est annul\'e par l'id\'eal de valuation de $K_\infty^{\s}$ (th. \ref{T3}). On en d\'eduit assez facilement la puret\'e de $A \inj A[\zeta_{p^i}, g^{\frac{1}{p^i}}, {\frac{1}{p}}]^{\s}$ (cor. \ref{C2}).

    \subsection{}       
 Comme l'ont montr\'e les travaux de Heitman et surtout Hochster, la conjecture du facteur direct est proche de (et impliqu\'ee par) l'existence d'alg\`ebres de Cohen-Macaulay (non n\'ecessairement noeth\'eriennes) pour les anneaux locaux (\cf \eg \cite{H4}\cite{HH}). On peut combiner les r\'esultats esquiss\'es ci-dessus aux techniques de \cite{H3} pour \'etablir cette existence.
       
     \begin{thm}\label{T2} Pour tout anneau local noeth\'erien complet int\`egre $B$ d'in\'egale caract\'eristique $(0,p)$, il existe une $B$-alg\`ebre $C$ de Cohen-Macaulay, \ie telle que toute suite s\'ecante maximale de $B$ devient une suite r\'eguli\`ere dans $C$.  
     \end{thm}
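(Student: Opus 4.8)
The plan is to derive Theorem~\ref{T2} from the direct summand theorem~\ref{T1}, the perfectoid comparison theorems~\ref{T3}--\ref{T4} (and their corollary~\ref{C2}), and Hochster's transfinite ``modification'' construction of big Cohen--Macaulay algebras \cite{H3}. By the structure theorem of Cohen, together with a standard faithfully flat reduction to a perfect residue field — one preserving the dimension, the domain hypothesis, and carrying a system of parameters to a system of parameters — one may assume that $B$ is module-finite over $A=W(k)[[T_1,\ldots,T_n]]$ with $k$ perfect. Put $\underline x=(p,T_1,\ldots,T_n)$, a system of parameters of $A$ and hence of $B$. Since a big Cohen--Macaulay algebra for one system of parameters can be enlarged to a balanced one (by a standard argument, \cf \cite{HH}, or by running the construction below over all systems of parameters at once), it suffices to produce a $B$-algebra $C$ on which $\underline x$ is a regular sequence and with $C\neq\underline x C$.

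\emph{Step 1: an almost Cohen--Macaulay $B$-algebra.} Choose $0\neq g\in A$ divisible by $p$ such that $B[\frac1g]$ is finite étale over $A[\frac1g]$ (possible, since $A[\frac1p]\to B[\frac1p]$ is finite and generically étale). The perfectoid Abhyankar lemma (Theorem~\ref{T4}) yields the ring $\sB^{\s}$; it is an $A$-algebra receiving the image of $B$ (integral over $A\subseteq g^{\f}\hat A_\infty\langle g^{\e}\rangle^{\s}$), and, modulo every power of $p$, it is almost finite étale and almost pure over the perfectoid ring $\hat A_\infty\langle g^{\e}\rangle^{\s}$. This base ring is itself an \emph{almost} Cohen--Macaulay $A$-algebra: $\underline x$ is almost regular on it, a statement about a perfectoid ring that one may check after tilting, where it becomes a Cohen--Macaulayness statement in equal characteristic $p$ (perfection is flat over a regular base, so $\underline x$ is regular before completion, and completing contributes only an ``almost'' defect), in the spirit of the elementary computation of~\ref{carp} and of~\ref{astH}. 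Transporting almost regularity through the almost faithfully flat map $\hat A_\infty\langle g^{\e}\rangle^{\s}\to\sB^{\s}$, one gets that $\underline x$ is almost regular on $\sB^{\s}$. Finally, composing the faithful flatness of $A\to A_\infty^{\s}$ with the almost faithful flatness of $A_\infty^{\s}\to\hat A_\infty\langle g^{\e}\rangle^{\s}$ (Theorem~\ref{T3}, which also underlies~\ref{C2}) and then with the almost purity of Theorem~\ref{T4}, one finds that $A\to\sB^{\s}$ is almost pure, so that $A/\underline x A=k$ maps almost injectively into $\sB^{\s}/\underline x\sB^{\s}$; in particular $\sB^{\s}/\underline x\sB^{\s}$ is not almost zero.

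\emph{Step 2: from almost to honest.} Starting from $C_0:=\sB^{\s}$ one runs Hochster's modification process \cite{H3}: at each successor stage, for every relation $x_{k+1}u=\sum_{i\le k}x_iu_i$ in $C_j$ with $u\notin(x_1,\ldots,x_k)C_j$, one adjoins indeterminates forcing $u\in(x_1,\ldots,x_k)C_{j+1}$; at limit stages one passes to colimits; let $C_\infty$ be the result. On $C_\infty$ the sequence $\underline x$ is regular by construction, and $C_\infty$ is a $B$-algebra, so the only issue is that $C_\infty\neq\underline x C_\infty$, \ie that $1$ is never forced into $(\underline x)$. This is where Theorem~\ref{T1} enters, playing here the role Heitmann's theorem plays in the three-dimensional argument of \cite{H3}: using the direct summand property of the regular ring $A$ together with the almost Cohen--Macaulay property of $C_0$ (for which $\sB^{\s}/\underline x\sB^{\s}$ is not almost zero), one shows that each modification enlarges $(\underline x)$ only ``within'' the idempotent ideal governing the almost structure, so that non-triviality modulo $(\underline x)$ survives through all successor and limit stages. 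Hence $C:=C_\infty$, enlarged to a balanced big Cohen--Macaulay algebra as above, establishes Theorem~\ref{T2}.

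I expect the real difficulty to be Step 2: propagating Hochster's bookkeeping through the transfinite modification process in the present almost-mathematical framework — in particular keeping non-triviality modulo $(\underline x)$ under control when the relevant idempotent ideal is not a valuation ideal, the genuinely new feature of the set-up. Secondary points demanding care are the faithfully flat reduction to perfect residue field compatibly with the domain hypothesis and with systems of parameters; the verification of the almost Cohen--Macaulay property of the perfectoid base, with its completion estimates; and the reconciliation, via Theorems~\ref{T3}--\ref{T4}, of the several ``almost'' ideals in play (the valuation ideal of $K_\infty^{\s}$ versus the ideal generated by the $(\zeta_{p^j}-1)g^{\frac{1}{p^j}}$) when the almost-flat and almost-pure statements are chained together.
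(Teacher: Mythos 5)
Your skeleton — reduce to $B$ finite over $A=W(k)[[\underline T]]$, use the perfectoid Abhyankar lemma to produce an $A$-algebra on which a system of parameters is \emph{almost} regular, then turn ``almost'' into ``honest'' via Hochster's partial modifications \cite{H3} — is indeed the shape of the paper's proof. But there are two concrete errors that would make your version break down.

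First, and most importantly, your Step 2 rests on Theorem~\ref{T1}, which is the wrong tool here and is in fact circular in spirit. The paper's mechanism for ``almost $\Rightarrow$ honest'' is Proposition~\ref{P1}, whose engine is Hochster's crucial lemma \cite[5.1]{H3} (used with $c=\pi^{1/p^m}$ for arbitrary $m$): it lets one inductively map every finite chain of degree-$n$ partial modifications $B=M_1\to\cdots\to M_\ell$ into the fixed almost Cohen--Macaulay algebra $C$, so $M_\ell\neq\frak m_B M_\ell$ follows from $C\neq\frak m_B C$, and the filtered colimit of the $M_\ell$ is Cohen--Macaulay. No appeal to the direct summand theorem is made at all — and this matters: the paper remarks (\S4.2.1) that Theorem~\ref{T2} \emph{gives a second proof} of the direct summand/monomial conjecture, so its proof must not invoke Theorem~\ref{T1}. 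Your guess that ``Theorem~\ref{T1} plays the role Heitmann's theorem plays in \cite{H3}'' mislocates the analogy: Heitmann's theorem is the arithmetic input supplying the almost CM algebra (here replaced by the perfectoid Abhyankar lemma), while the bookkeeping you worry about is entirely handled by \cite[5.1]{H3}, which is insensitive to whether the idempotent ideal is a valuation ideal.

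Second, your choice of parameters is incompatible with the almost-CM criterion actually needed. The paper takes $\underline x$ with $x_1=p^2$ (not $p$) and $g\in A\setminus pA$ (not divisible by $p$), and works with $\pi=\varpi g$ where $\varpi\mid p$. The criterion it then applies (Lemma~\ref{L4}(b)) requires $(\pi)\cap A\not\subset x_1A$; one has $(\pi)\cap A=pgA$, and $pg\notin p^2A$ precisely because $g\notin pA$. With your choices ($x_1=p$, $g$ divisible by $p$) one gets $(\pi)\cap A\subset pA=x_1A$, and the criterion fails — equivalently, $\pi^{1/p^\infty}$ would land in $\frak m_B C$ and you could not conclude $C\neq\frak m_B C$. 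Relatedly, the almost Cohen--Macaulay property itself is not checked by a tilting argument as you suggest; the paper verifies it directly via Lemma~\ref{L4}(b), reducing to almost faithful flatness of $C/p^2C$ over $A/p^2A$, which follows from chaining Theorems~\ref{T3} and~\ref{T4} (indeed the perfectoid Abhyankar lemma is only needed modulo $p^2$ here). Your instinct that reconciling the two ``almost'' frameworks (the valuation ideal of $K_\infty^{\s}$ versus $(\varpi g)^{1/p^\infty}$) is delicate is correct — the paper handles this via the $(\,)_{!!}$ functor and explicit changes of framework — but the resolution is not the one you sketch.
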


  \medskip
   \bigskip
\begin{small} {\it Remerciements.} Ma vive reconnaissance va \`a Luisa Fiorot, qui m'a fait conna\^itre la conjecture du facteur direct fin 2012, et 
  m'a expliqu\'e son importance dans la probl\'ematique de la descente.  \end{small}

   \bigskip 
   
 \bigskip
 
 \bigskip

  \bigskip 
   
 \bigskip
 
 \bigskip 
 
    \section{Pr\'eliminaires.}
    
       \subsection{}    Commen\c cons par deux lemmes aux confins de l'alg\`ebre noeth\'erienne. 

       \begin{lemma}\label{L1} Soient $R$ un anneau noeth\'erien, ${\frak I}$ un id\'eal et $S$ une $R$-alg\`ebre plate (non n\'ecessairement noeth\'erienne). Alors le compl\'et\'e-s\'epar\'e ${\frak I}$-adique $\hat S$ de $S$ est plat sur $R$, et pour tout module $M$ de type fini sur $R$, le compl\'et\'e de $M_S$ s'identifie \`a $M_{\hat S}$.   Si ${\frak I}$ est contenu dans le radical de Jacobson de $R$ (ce qui est le cas si $R$ est $\frak I$-adiquement complet) 
          et si $S$ est fid\`element plat sur $R$,  alors $\hat S$ est fid\`element plat sur $R$.    \end{lemma}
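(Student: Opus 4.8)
Le principe est de tout ramener à l'énoncé suivant : pour tout $R$-module de type fini $M$, l'application canonique $M\otimes_R\hat S\to\widehat{M\otimes_R S}$ (complété $\mathfrak I$-adique) est un isomorphisme. C'est précisément la deuxième assertion du lemme, et c'est d'elle que se déduiront formellement la platitude puis la fidèle platitude de $\hat S$. On la démontre en combinant le lemme d'Artin--Rees dans l'anneau noethérien $R$ avec la platitude de $S$.

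Je commencerais par le fait topologique suivant. Soit $N_0\subseteq R^{\oplus n}$ un sous-module --- automatiquement de type fini, $R$ étant noethérien --- et $N:=N_0\otimes_R S$ ; la platitude de $S$ donne $N\hookrightarrow S^{\oplus n}$, et, l'intersection de sous-modules étant un noyau, $N\cap\mathfrak I^m S^{\oplus n}=(N_0\cap\mathfrak I^m R^{\oplus n})\otimes_R S$. Le lemme d'Artin--Rees dans $R$ fournit alors $c$ tel que $\mathfrak I^m N\subseteq N\cap\mathfrak I^m S^{\oplus n}\subseteq\mathfrak I^{m-c}N$ pour $m\ge c$ : la topologie $\mathfrak I$-adique de $N$ coïncide avec celle induite par $S^{\oplus n}$. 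Par suite, toute suite exacte $0\to N\to S^{\oplus n}\to Q\to 0$ avec $N$ de cette forme reste exacte après complétion $\mathfrak I$-adique, soit $0\to\hat N\to\hat S^{\oplus n}\to\hat Q\to 0$ : l'exactitude à gauche de $\varprojlim$ donne l'injectivité, et l'annulation de $\varprojlim^1$ du système des surjections $N/(N\cap\mathfrak I^m S^{\oplus n})$ donne la surjectivité, la cofinalité des deux filtrations sur $N$ permettant d'identifier $\varprojlim N/(N\cap\mathfrak I^m S^{\oplus n})=\hat N$ ; ici $\hat S^{\oplus n}=(S^{\oplus n})^{\wedge}$, le complété commutant aux sommes directes finies.

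En tensorisant une présentation finie $R^{\oplus a}\xrightarrow{\phi}R^{\oplus b}\to M\to 0$ par l'algèbre plate $S$, on décompose $\phi_S$ en $S^{\oplus a}\twoheadrightarrow K\hookrightarrow S^{\oplus b}$ avec $K=\operatorname{Im}(\phi)\otimes_R S$ et $\ker\phi_S=(\ker\phi)\otimes_R S$, tous deux changements de base de sous-modules de type fini de modules libres de rang fini. L'exactitude ci-dessus appliquée aux suites $0\to(\ker\phi)\otimes_R S\to S^{\oplus a}\to K\to 0$ et $0\to K\to S^{\oplus b}\to M\otimes_R S\to 0$, jointe à la fonctorialité du complété (qui préserve la factorisation de $\phi_S$), identifie $\operatorname{coker}(\widehat{\phi_S})$ à $\widehat{M\otimes_R S}$ ; d'autre part $\widehat{\phi_S}=\phi\otimes_R\hat S$ (même matrice à coefficients dans $R$), donc $\operatorname{coker}(\widehat{\phi_S})=M\otimes_R\hat S$ par exactitude à droite du produit tensoriel : c'est l'identification $\widehat{M_S}\simeq M_{\hat S}$ cherchée. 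La platitude de $\hat S$ sur $R$ en résulte : il suffit de montrer $\operatorname{Tor}^R_1(M,\hat S)=0$ pour $M$ de type fini (le $\operatorname{Tor}$ commutant aux colimites filtrantes), et en écrivant $0\to N\to R^{\oplus b}\to M\to 0$ avec $N$ de type fini, on a $\operatorname{Tor}^R_1(M,\hat S)=\ker\bigl(N\otimes_R\hat S\to\hat S^{\oplus b}\bigr)=\ker\bigl(\widehat{N\otimes_R S}\to\widehat{S^{\oplus b}}\bigr)$, noyau nul d'après la partie ``exactitude à gauche'' appliquée à l'injection $N\otimes_R S\hookrightarrow S^{\oplus b}$.

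Enfin, supposons $\mathfrak I$ contenu dans le radical de Jacobson de $R$ et $S$ fidèlement plat. Comme $\hat S$ est déjà plat, il suffit de vérifier $\hat S\otimes_R R/\mathfrak m\neq 0$ pour tout idéal maximal $\mathfrak m$. Or $\mathfrak I\subseteq\mathfrak m$, donc $S/\mathfrak m S=R/\mathfrak m\otimes_R S$ est annulé par $\mathfrak I$, donc égal à son propre complété $\mathfrak I$-adique ; l'identification précédente donne $\hat S\otimes_R R/\mathfrak m=\widehat{S/\mathfrak m S}=S/\mathfrak m S\neq 0$, la non-nullité venant de la fidèle platitude de $S$ sur $R$. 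Le point délicat est concentré dans le deuxième paragraphe --- l'interaction d'Artin--Rees avec la platitude pour cerner la topologie $\mathfrak I$-adique sur les sous-modules de $S^{\oplus n}$ provenant de $R^{\oplus n}$ ; une fois ceci acquis, le reste est purement formel.
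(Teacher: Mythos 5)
Your proposal is correct and uses essentially the same ingredients as the paper's proof (Artin--Rees in the noetherian ring $R$, flatness of $S$ to transport the topological comparison, Mittag--Leffler to pass to the inverse limit); the paper works directly with an arbitrary short exact sequence of finitely generated $R$-modules and its truncations, while you first isolate the topological cofinality for submodules of $S^{\oplus n}$ coming from $R^{\oplus n}$ and then build up via a presentation, but the core argument is the same.
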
 
 
    \begin{proof}\footnote{ce lemme appara\^it sous diverses formes et avec diverses preuves dans la litt\'erature, par exemple \cite[th. 0.1]{Y}; la preuve qui suit est plus \'el\'ementaire que celle de \loccit. L'\'enonc\'e est \`a premi\`ere vue surprenant dans le cas o\`u $S$ n'est pas s\'epar\'e; il ne dit rien d'ailleurs sur le s\'epar\'e de $S$.}      Consid\'erons une suite exacte courte $0\to M_1\to M_2 \to M_3  \to 0$  de  $R$-modules de type fini. D'apr\`es Artin-Rees, il existe $m$ tel que pour tout $n\geq m$, en posant $N_1 = M_1 \cap \frak I^m M_2$, la suite $0\to M_1/{\frak I}^{n-m} N_1 \to M_2/ {\frak I}^n M_2 \to M_3  /{\frak I}^n M_3 \to 0$ soit exacte. On obtient encore une suite exacte en tensorisant avec la $R$-alg\`ebre plate $S$, puis, d'apr\`es Mittag-Leffler, en passant \`a la limite sur $n$. Puisque ${\frak I}^{n-m}N_1$ est coinc\'e entre ${\frak I}^n M_1$ et ${\frak I}^{n-m}M_1$, on obtient une suite exacte courte  $0\to \widehat{M_{1S}} \to \widehat{M_{2S}} \to \widehat{M_{3S}}  \to 0$. 
   
Pour tout $R$-module de type fini $M$, le morphisme
  $M_{\hat S} \to \widehat{M_{ S}} $ est surjectif, 
    et on d\'eduit de ce qui pr\'ec\`ede, en prenant une pr\'esentation de $M$, qu'il est en fait bijectif. 
   On d\'eduit de l\`a et de la suite exacte pr\'ec\'edente que  $\hat S$  est plat sur $R $.  
   
    Supposons ensuite qu'on ait $M_{\hat S} =0$. Alors  $(M/{\frak I}M)\otimes_R S =M\otimes_R (\hat S/ \frak I\hat S) = 0$.  Si $S$ est fid\`element plat, on a $M= {\frak I}M$. Si ${\frak I}$ est contenu dans le radical, on a $M=0$ d'apr\`es Nakayama, et on conclut que $\hat S$  est fid\`element plat sur $R $, ce qui ach\`eve la preuve du lemme. \end{proof}

   \begin{lemma}\label{L2} Soient $R$ est un anneau noeth\'erien, ${\frak J}$ un id\'eal, $S$ une $R$-alg\`ebre fid\`element plate (ou pure), et $\frak K $ un id\'eal idempotent de $S$ contenu dans $ \frak J S$.    
      Alors pour tout $s\in  R \cap \frak K  $, il existe $r\in \frak J$ tel que $(1-r) s =0$. En particulier, si $R$ est local et $R\cap  \frak K   \neq 0$, alors $\frak J = R$.  
     \end{lemma}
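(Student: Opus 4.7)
\medskip\noindent\textbf{Esquisse de la preuve envisag\'ee.} Je proc\'ederais en trois \'etapes. D'abord, tirer parti de l'idempotence de $\frak K$: comme $\frak K=\frak K^n$ pour tout $n\geq 1$ et $\frak K \subseteq \frak J S$, on en tire $\frak K \subseteq (\frak J S)^n = \frak J^n S$, donc tout $s\in R\cap \frak K$ se trouve dans $R\cap \bigcap_{n\geq 1} \frak J^n S$.

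Ensuite, utiliser la puret\'e de $R\inj S$ pour redescendre sur $R$: par d\'efinition m\^eme de la puret\'e (a fortiori si $S$ est fid\`element plate), la fl\`eche $R/\frak J^n \to R/\frak J^n \otimes_R S = S/\frak J^n S$ est injective pour tout $n$, d'o\`u $R\cap \frak J^n S = \frak J^n$. Par cons\'equent $s\in \bigcap_n \frak J^n$.

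Enfin, le th\'eor\`eme d'intersection de Krull appliqu\'e \`a l'anneau noeth\'erien $R$ et \`a l'id\'eal $\frak J$ fournit directement un $r\in \frak J$ tel que $(1-r)s=0$, ce qui est la premi\`ere assertion. Le cas local est alors imm\'ediat: pour $0\neq s\in R\cap \frak K$, si l'on avait $\frak J\subseteq \frak m$, alors $1-r$ serait inversible, forcant $s=0$, ce qui est exclu; donc $\frak J=R$. L'obstacle conceptuel --- s'il y en a un --- r\'eside dans la premi\`ere \'etape: reconna\^{\i}tre que l'idempotence permet d'it\'erer ind\'efiniment l'inclusion $\frak K \subseteq \frak J S$ pour atteindre l'intersection de toutes les puissances. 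Le reste rel\`eve de r\'esultats classiques d'alg\`ebre commutative noeth\'erienne.
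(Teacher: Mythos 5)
Your proof is correct and follows essentially the same route as the paper: idempotence gives $\frak K\subseteq \frak J^nS$ for all $n$, purity (applied to $R/\frak J^n$, which is exactly the contraction identity $R\cap\frak J^nS=\frak J^n$ that the paper cites from Bourbaki) descends this to $s\in\bigcap_n\frak J^n$, and the Krull intersection lemma concludes. The treatment of the local case is also the intended one.
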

     
   \begin{proof} Pour tout $n\geq 1$, $\frak K= \frak K^n$ est contenu dans $(\frak J S)^n =  \frak J^n S$. Par platitude fid\`ele (ou puret\'e), on a $R \cap (\frak J S)^n   = R \cap ({\frak J}^{n}\otimes S) = {\frak J}^{ n}$  \cite[I, \S 2, n. 6, prop. 7]{B}. Donc $R\cap \frak K$ est contenu dans  $\cap\,{\frak J}^{ n}$. On conclut par le lemme de Krull. \end{proof}
   
   \subsection{}   Voici quelques rappels et compl\'ements sur la {\it localisation de Weierstrass} des alg\`ebres de Banach uniformes (\ie dont la norme est \'equivalente \`a la norme spectrale associ\'ee \cite[2.2]{A}). 
   
 Soient $\sK$ un corps complet pour une valuation non-triviale, $\lambda $ un \'el\'ement non nul de l'anneau de valuation $\sK^{\s}$. Soient  $\sB$ une $\sK$-alg\`ebre de Banach uniforme, et $f$ un \'el\'ement de la boule unit\'e $\sB_{\leq 1}$. Alors $\sB\{ \frac{f}{\lambda}\}$ est d\'efinie comme le quotient de $\sB\langle U \rangle$ par l'adh\'erence de l'id\'eal engendr\'e par $\lambda U - f$; c'est une alg\`ebre de Banach pour la norme quotient (non n\'ecessairement uniforme). En fait, cet id\'eal est {\it ferm\'e}  \cite[prop. 2.3]{M}, de sorte que \begin{equation}\label{m1}\sB\{ \frac{f}{\lambda}\}= \sB\langle U \rangle/ (\lambda U - f).\end{equation} 
   Si $f$ est non-diviseur de z\'ero dans $\sB$, $\lambda U- f$ l'est aussi dans $\sB\langle U \rangle$.

     \subsubsection{} 
  Supposons que $\vert \sK\vert$ soit dense dans $\vert \sB\vert$ (ce qui est le cas si la valuation est non discr\`ete). Pour tout \'el\'ement $\varpi\in \sK^{\s\s}$, la topologie de $\sB_{\leq 1}$ est la topologie $\varpi$-adique \cite[sor. 2.3.1]{A}. 
  
  Supposons que {\it la multiplication par $f$ soit isom\'etrique} dans $\sB$ (ce qui est en particulier le cas si la norme de $\sB$ est multiplicative et $\vert f\vert = 1$). Alors le quotient  $  \sB_{\leq 1}\langle U \rangle/ (\lambda U - f)$ est sans $\varpi$-torsion: en effet, il suffit de voir que tout \'el\'ement annul\'e par $\lambda$ est nul; or si $\sum b_m U^m$ rel\`eve un tel \'el\'ement, l'\'equation $\lambda (\sum b_m U^m)=    (\lambda U-  f )\sum_0^\infty a_m U^m, \;\; a_m \in \sB_{\leq 1},$ implique que $\lambda$ divise tous les $ a_m$ puisque la multiplication par $f$ est injective modulo $\lambda$. On a donc $ (\lambda U - f)  \sB_{\leq 1} =  ((\lambda U - f)  \sB)_{\leq 1}$, et il en d\'ecoule que 
   \begin{equation}\label{m2}\sB\{ \frac{f}{\lambda}\}_{\leq 1}= \sB_{\leq 1}\langle U \rangle/ (\lambda U - f)\end{equation}
  si la valuation de $\sK$ et discr\`ete, et
    \begin{equation}\label{m2'}\sB\{ \frac{f}{\lambda}\}_{\leq 1}= (\sB_{\leq 1}\langle U \rangle/ (\lambda U - f))^a_\ast\end{equation}
sinon, en utilisant la notation $(\,)^a_\ast$ de la presque-alg\`ebre dans le cadre $(\sK^{\s}, \sK^{\s\s})$, qui se traduit ici par $\displaystyle\frak B^a_\ast := \bigcap_{\eta\in \sK^{\s\s}}\, \eta^{-1}\frak B\;$ \cite[sor. 2.3.1]{A}.

 \smallskip  La formule \eqref{m1} montre par ailleurs que $ \sB_{\leq 1}\langle U \rangle/ (\lambda U - f)$ est $\varpi$-adiquement s\'epar\'e, \ie $ (\lambda U - f)$ est ferm\'e dans $ \sB_{\leq 1}\langle U \rangle$. On en d\'eduit l'\'egalit\'e
  \begin{equation}\label{m3}\sB_{\leq 1}\langle U \rangle/ (\lambda U - f) = \widehat{\sB_{\leq 1}[\frac{f}{\lambda}]}, \end{equation} 
avec le compl\'et\'e $\varpi$-adique de $\sB_{\leq 1}[\frac{f}{\lambda}]\subset \sB$. En effet, le m\^eme argument que ci-dessus montre que $ \sB_{\leq 1}[U] / (\lambda U - f) $ est sans $\varpi$-torsion, de sorte que la suite
 \begin{equation} 0 \to (\lambda U - f) \sB_{\leq 1}[U] \to  \sB_{\leq 1}[U] \to \sB_{\leq 1}[\frac{f}{\lambda}] \to 0  \end{equation} est exacte. Elle induit comme d'habitude par compl\'etion
 une suite 
 \begin{equation}\label{m4} 0\to \widehat{(\lambda U - f) \sB_{\leq 1}[U]}   \to  \sB_{\leq 1}\langle U\rangle \to \widehat{\sB_{\leq 1}[\frac{f}{\lambda}]} \to 0 \end{equation}  exacte \`a droite et o\`u l'image de $ (\lambda U - f) \sB_{\leq 1}\langle U\rangle$ est dense dans le noyau de $ \sB_{\leq 1}\langle U\rangle \to \widehat{\sB_{\leq 1}[\frac{f}{\lambda}]}$; or on a $\widehat{(\lambda U - f) \sB_{\leq 1}[U]} = (\lambda U - f) \sB_{\leq 1}\langle U\rangle$ et la suite \eqref{m4}  est exacte \`a gauche; et puisque $(\lambda U - f) \sB_{\leq 1}\langle U\rangle$ est ferm\'e dans $\sB_{\leq 1}\langle U\rangle$, on conclut que la suite \eqref{m4} est exacte.   
 
  \smallskip  L'alg\`ebre $\sB_{\leq 1}\langle U \rangle/ (\lambda U - f) =  \widehat{\sB_{\leq 1}[\frac{f}{\lambda}]}$ ne change pas, \`a isomorphisme pr\`es, si l'on change $f$ en $f'$ tel que $\vert f - f'\vert \leq \vert\lambda\vert$ (\resp $\lambda$ en $\lambda'$ tel que $\vert \lambda\vert = \vert \lambda'\vert$), l'isomorphisme \'etant induit par $U \mapsto \frac{\lambda}{\lambda'} (U + h)$ o\`u $h = \lambda^{-1}(f'-f)\in \sB_{\leq 1}$.

 \section{Extensions {``kumm\'eriennes "} de $W({k})[[T_1,\ldots, T_n]]$.}

\subsection{} Pour tout corps $p$-adique $K$, on note $K^{\s}$ l'anneau de valuation et $K^{\s\s}$ l'id\'eal de valuation. 

 Soient $k$ un corps parfait de caract\'eristique $p$, $K_0 $ le corps des fractions de l'anneau des vecteurs de Witt $W(k)$. 
  Consid\'erons la tour cyclotomique $K_\infty = \cup K_j$ avec $K_j = K_0[\zeta_{p^j}]$. Le compl\'et\'e $p$-adique $\hat K_\infty $ est alors un corps perfecto\"{\i}de: l'endomorphisme de Frobenius de $\hat K_\infty^{\s}/p$ est surjectif $\,$ (voir \cite[\S 3.1, 3.2]{A} pour les d\'efinitions et r\'esultats de base concernant les corps et alg\`ebres perfecto\"{\i}des). 
  
\subsection{}  Comme dans l'introduction, posons \begin{equation} A  := W({k})[[T_1,\ldots, T_n]] \end{equation} 
et fixons un \'el\'ement non nul $g\in A$.  

 Posons $\,K^{\s}_j[[T_{\leq n}^{\frac{1}{p^j}}]][g^{\frac{1}{p^k}}] := K^{\s}_j[[T_1^{\frac{1}{p^j}},\ldots, T_n^{\frac{1}{p^j}}]][T]/(T^{p^k} - g) \,$ et
 \begin{equation} A_{jk} = K^{\s}_j[[T_{\leq n}^{\frac{1}{p^j}}]][g^{\frac{1}{p^k}}][\frac{1}{p}]. \end{equation} 
 Lorsque $(j,k)$ varie dans $ \N^2$, on obtient un double syst\`eme inductif de $K_0$-alg\`ebres, dont les morphismes de transition sont les inclusions naturelles. On permet la valeur $\infty$ pour l'un des indices (ou les deux), en prenant la r\'eunion index\'ee par les valeurs finies de cet indice.

 \subsection{}   On note $A_{jk}^{\s}$ la fermeture int\'egrale de $A$ dans $A_{jk} $. Cette $K_j^{\s}$-alg\`ebre contient $K^{\s}_j[[T_{\leq n}^{\frac{1}{p^j}}]][g^{\frac{1}{p^k}}]$ et v\'erifie $A_{jk} = A_{jk}^{\s}[\frac{1}{p}]$. Elle est noeth\'erienne et $p$-adiquement compl\`ete si $(j,k) \in \N^2$. Si $j=\infty$, elle est r\'eunion croissante d'alg\`ebres $A_{j'k'}^{\s}$ qui sont noeth\'eriennes, int\'egralement ferm\'ees dans $A_{j'k'}$, et finies les unes sur les autres, donc elle est compl\`etement int\'egralement ferm\'ee dans $A_{\infty k}$ (\ie tout \'el\'ement de $A_{\infty k}$ dont les puissances sont contenues dans un sous-$A_{\infty k}^{\s}$-module de type fini appartient \`a $A_{\infty k}^{\s}$). On note $\hat A_{\infty k}^{\s}$ le compl\'et\'e $p$-adique de $A_{\infty k}^{\s}$.
    
    Pour $ j'\leq j, k'\leq k$, on a $A_{j'k'}^{\s} = A_{jk}^{\s} \cap A_{j'k'}$, et $A_{00}^{\s} = A$.  
    
  Pour $j\in \N$, on a $A_{j0}^{\s}= K_j^{\s}[[T_{\leq n}^{\frac{1}{p^{j}}}]]$.  Le syst\`eme $(A_{j0}^{\s})_j$ est \`a fl\`eches de transition finies et plates, de sorte que $\, A_{\infty0}^{\s}\,$ est fid\`element plate (en fait libre) sur chaque $\,A_{j0}^{\s}$. D'apr\`es le lemme \ref{L1}, $\, \hat A_{\infty0}^{\s}\,$ est fid\`element plate sur chaque $\,A_{j0}^{\s}$, donc aussi sur $\, A_{\infty0}^{\s}$.  
  
  On a $\, \hat A_{\infty0}^{\s}\cong W(k[[T_{\leq n}^{\e}]])\hat\otimes_{W(k)} \hat K_\infty^{\s}\,$ \cite[ex. 3.2.3 (2)]{A} (nous n'en ferons pas usage).

     \subsection{}  Puisque $A_{\infty k}^{\s}$ est compl\`etement int\'egralement ferm\'ee dans $A_{\infty k}$, et que $\vert K_\infty\vert$ est dense dans $\R_+$, il existe une unique norme de $K_\infty$-alg\`ebre sur $A_{\infty k}$ dont $A_{\infty k}^{\s}$ est la boule unit\'e, et cette norme est spectrale (\ie multiplicative pour les puissances) \cite[sorite 2.3.1 (4)]{A}. 
 Si l'on munit les $A_{jk}$ de la norme (spectrale) induite, la boule unit\'e est $A_{jk}^{\s}$, et lorsque $(j,k)$ varie, les fl\`eches de transition du syst\`eme $(A_{jk}^{\s})$ (les inclusions naturelles) sont isom\'etriques  \cite[sorite 2.3.1 (2c)]{A}.  

 Les $A_{j0}$ sont multiplicativement norm\'ees, mais ce n'est pas n\'ecessairement le cas de $A_{jk}$ si $k\neq 0$. L'id\'eal $\,A_{\infty0}^{\s\s}$ de $\,A_{\infty0}^{\s}$ form\'e des \'el\'ements topologiquement nilpotents est un id\'eal premier idempotent  \'egal \`a $K_\infty^{\s\s}\,A_{\infty0}^{\s}$ \cite[2.2.1, 2.2.2]{A}.

\subsection{}  Nous renvoyons \`a \cite[\S 1]{A} pour un r\'esum\'e des notions de presque-alg\`ebre utilis\'ees dans la suite (notamment \S 1.5  pour les changements de cadre).

  \begin{thm}\label{T3} Dans le cadre $(K_\infty^{\s}, K_\infty^{\s\s}),\; \hat A_{\infty\infty}^{\s } $ est presque fid\`element plate sur $A_{\infty0}^{\s}$.   \end{thm}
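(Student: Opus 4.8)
The plan is to factor the map $A_{\infty0}^{\s}\to\hat A_{\infty\infty}^{\s}$ through the $p$-adic completion $C:=\hat A_{\infty0}^{\s}$, and to realise $\hat A_{\infty\infty}^{\s}$ as a $p$-adically completed filtered colimit of \emph{perfecto\"{\i}d} Tate algebras that are almost flat over $C$. Since $C$ is faithfully flat over $A_{\infty0}^{\s}$ (proved above) and flatness is transitive, it suffices to show $\hat A_{\infty\infty}^{\s}$ is almost faithfully flat over $C$; the descent of the faithfulness statement along $A_{\infty0}^{\s}\to C$ is then immediate, as $\epsilon\,(M\otimes_{A_{\infty0}^{\s}}C)=0$ for $\epsilon\in K_\infty^{\s\s}$ forces $\epsilon M=0$. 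Note that the obvious flat candidate is too small: the $p$-adic completion of $C[g^{\e}]=\varinjlim_k C[x]/(x^{p^k}-g)$ \emph{is} faithfully flat over $C$ — it is $p$-torsion-free and its reduction mod $p$ is a filtered colimit of finite free $C/p$-modules, so one may invoke the local criterion for flatness over the $p$-adically complete ring $C$ in place of Lemma \ref{L1}, unavailable here because $C$ is not noeth\'erien — but it is only a \emph{proper} subalgebra of $\hat A_{\infty\infty}^{\s}$ (the $p$-adic completion of the full integral closure $A_{\infty\infty}^{\s}$ of $A$ in $A_{\infty0}^{\s}[g^{\e}][\frac1p]$), and the missing normalisation is genuinely nontrivial. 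The thickening is designed to capture the normalisation while staying flat.

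Adjoin a coordinate $S$ and set $\sD:=C\langle S^{\e}\rangle[\frac1p]$, a perfecto\"{\i}d Tate $C[\frac1p]$-algebra whose unit ball $C\langle S^{\e}\rangle$ is the $p$-adic completion of a free $C$-module, hence flat (indeed faithfully flat) over $C$. As $g\in A\subset C$ lies in the unit ball, form for each $m\geq1$ the Weierstrass localisation $\sD_m:=\sD\{\frac{S-g}{p^m}\}$ — the bounded functions on the tubular neighbourhood $\{|S-g|\leq|p^m|\}$ of the section $S=g$ — with transition morphisms $\sD_m\to\sD_{m+1}$ given by restriction to the smaller tube. Each $\sD_m$ is again perfecto\"{\i}d, being a rational localisation of $\sD$, and its unit ball $\sD_m^{\s}$ is, by the computations of the preliminary section, the ``presque-int\'egral closure'' of $C\langle S^{\e}\rangle\langle U\rangle/(p^mU-(S-g))$. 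In $\widehat{\varinjlim_m\sD_m^{\s}}$ one has $S-g=p^mU_m$ for every $m$, so $S-g$ is infinitely $p$-divisible and vanishes in the separated $p$-adic completion; there $S$ becomes $g$ and the $S^{1/p^j}$ become a compatible system of $p$-power roots of $g$. Since the $\s$-subrings of perfecto\"{\i}d Tate rings are integrally closed, the completed colimit swallows the integral closure of $A$, and one obtains an isomorphism $\widehat{\varinjlim_m\sD_m^{\s}}\cong\hat A_{\infty\infty}^{\s}$ in the almost category; this is where ``presque'' enters the statement — it absorbs the gap between that integral closure and the power-bounded elements of $\hat A_{\infty\infty}^{\s}[\frac1p]$.

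Now rational (hence Weierstrass) localisations of a uniform perfecto\"{\i}d Tate ring are flat, so $\sD_m^{\s}$ is almost flat over $\sD^{\s}=C\langle S^{\e}\rangle$, hence over $C$, and thus $\varinjlim_m\sD_m^{\s}$ is almost flat over $C$. To pass to the $p$-adic completion one again uses the local criterion for flatness over the $p$-adically complete ring $C$ (on which $p$ is a non-diviseur de z\'ero): a $p$-adically complete $C$-module is flat as soon as it is almost $p$-torsion-free and its reduction mod $p$ is almost flat over $C/p$ — both hold, the first surviving filtered colimits and $p$-adic completion, the second because $\widehat{\varinjlim_m\sD_m^{\s}}/p=\varinjlim_m(\sD_m^{\s}/p)$. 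Hence $\hat A_{\infty\infty}^{\s}$ is almost flat over $C$. For faithfulness, reduce mod $p$: $A_{\infty\infty}^{\s}$ is integral over $A_{\infty0}^{\s}[g^{\e}]$, whose $p$-adic completion is faithfully flat over $C$, so $\hat A_{\infty\infty}^{\s}/p$ is (almost) faithful over $C/p$; as $C$ is local with $p$ in its maximal ideal and $\hat A_{\infty\infty}^{\s}$ is almost flat over $C$, this yields almost faithful flatness over $C$, hence over $A_{\infty0}^{\s}$ by the first paragraph.

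The crux is the interface of the last two paragraphs: making the isomorphism $\widehat{\varinjlim_m\sD_m^{\s}}\cong\hat A_{\infty\infty}^{\s}$ precise — that is, controlling the normalisation against power-boundedness inside the perfecto\"{\i}d formalism — and proving that flatness of the Weierstrass localisations is preserved under $p$-adic completion over the non-noeth\'erien base $C$, where Lemma \ref{L1} fails and one must substitute the complete local criterion. These completion-theoretic phenomena are invisible in the radicial characteristic-$p$ analogue, and they are precisely what force the ``presque''.
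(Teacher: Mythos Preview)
Your global architecture matches the paper's: express $\hat A_{\infty\infty}^{\s}$ as a $p$-completed colimit of Weierstrass localisations of the perfectoid algebra $\hat A_{\infty0}\langle T^{\e}\rangle$ along shrinking tubes around $T=g$ (your $S=g$), then establish flatness tube by tube. The gap is in how that flatness is obtained.

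You claim that rational localisations of perfectoid Tate rings induce almost flat maps on $(\,)^{\s}$, so that $\sD_m^{\s}$ is almost flat over $\sD^{\s}=C\langle S^{\e}\rangle$. This inference is invalid: flatness of $\sD_m$ over $\sD$ holds after inverting $p$, but not integrally, and in fact $\sD_m^{\s}$ is \emph{not} almost flat over $\sD^{\s}$. Indeed $S-g\in p^m\sD_m^{\s}$, so $S-g$ vanishes in $\sD_m^{\s}/p$, while it is a non-zerodivisor in $\sD^{\s}/p=(C/p)[S^{\e}]$; hence ${\rm Tor}_1^{\sD^{\s}/p}\bigl(\sD_m^{\s}/p,(\sD^{\s}/p)/(S-g)\bigr)\cong\sD_m^{\s}/p$, which is far from almost zero. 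So transitivity through $\sD^{\s}$ cannot work. The paper proves the (true) almost flatness of each tube over the base by a different route: it descends to the \emph{noetherian} local rings $A_{j0}^{\s}$. Scholze's lemma first replaces $T-g$ by an element $f_i$ admitting compatible $p$-power roots, rewriting the tube as a completed colimit over $k$ of rings $\hat A_{\infty0}^{\s}\langle T^{\e},U\rangle/(\varpi^{i/p^k}U-f_i^{1/p^k})$; one then approximates $f_i^{1/p^k}$ and $\varpi^{i/p^k}$ by elements $f_{ik},\varpi_{ik}$ living at finite level $j$; finally one applies the fibre criterion for flatness over the noetherian ring $A_{j0}^{\s}$, the special fibre being visibly free thanks to the key congruence $f_{ik}\equiv T^{1/p^k}-g_k\bmod\varpi_j$. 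The specific shape of the element $T-g$ is used here in an essential way; no general perfectoid flatness principle substitutes for it. This noetherian descent simultaneously dissolves your second worry: every $p$-adic completion in the argument is performed over some noetherian $A_{j0}^{\s}$, where Lemma~\ref{L1} applies directly, so one never needs a ``local flatness criterion'' over the non-noetherian $C$.
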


     \begin{proof}   Comme $\hat K_\infty$ est un corps perfecto\"{\i}de, il existe $\varpi\in \hat K_\infty^{\s}$ tel que $\vert p\vert \leq \vert \varpi \vert <1$ et admettant des racines $\varpi^{\frac{1}{p^h}}\in \hat K_\infty^{\s}$; on le fixe, ainsi qu'un \'el\'ement  $\varpi'\in K_\infty^{\s}$ tel que  $\vert \varpi\vert < \vert \varpi'\vert <1$. Si $\sB$ est une $\hat K_\infty$-alg\`ebre de Banach, on note $\sB^{\s}$ l'anneau de ses \'el\'ements de puissance born\'ee; on a donc $\sB_{\leq 1}\subset  \sB^{\s}$, avec \'egalit\'e si $\sB$ est spectrale.
     
 \smallskip Commen\c cons par \'etablir quelques formules, exprimant $\hat A_{\infty\infty}^{\s } $ en termes d'une triple colimite compl\'et\'ee.    D'apr\`es \cite[cor. 2.9.2]{A}, on a un isomorphisme canonique 
 \begin{equation}\label{e1}\displaystyle \hat A_{\infty\infty}^{\s}\; \stackrel{\sim}{\to} \;\widehat{\rm{colim}}_{i}\; \hat A_{\infty0}\langle T^{\e}\rangle \{ \frac{T-g}{\varpi^{i}}\}^{\s},\end{equation} 
 o\`u $\hat A_{\infty0}\langle T^{\e}\rangle$ le compl\'et\'e $p$-adique de $ A_{\infty0}[ T^{\e}]$ (c'est aussi le compl\'et\'e pour la norme multiplicative de Gauss), et $\widehat{\rm{colim}}$ le compl\'et\'e $p$-adique de la colimite \cite[\S 2.6.3]{A}; on a  \begin{equation}\label{e2}\hat A_{\infty0}\langle T^{\e}\rangle^{\s} = \hat A_{\infty0}^{\s}\langle T^{\e}\rangle = \hat A_{\infty0}\langle T^{\e}\rangle_{\leq 1}. \end{equation}
et $T-g\,$ est de norme $1$.
   
 \smallskip Nous allons tirer parti de ce que  $\hat A_{\infty0}$ est une $\hat K_\infty$-alg\`ebre perfecto\"{\i}de (c'est celle not\'ee $\hat A_\infty$ dans \cite[ex. 3.2.3 (2)]{A}), donc $\hat A_{\infty0}\langle T^{\e}\rangle $ aussi \cite[ex. 3.2.3 (1)]{A},
  et du fait qu'on dispose d'apr\`es Scholze d'une description en presque-alg\`ebre des boules unit\'e des localisations d'alg\`ebres perfecto\"{\i}des. En effet, selon \cite[cor. 6.7 i]{S1}, il existe un \'el\'ement $f_{i}\in   \hat A_{\infty0}^{\s}\langle T^{\e}\rangle$, congru \`a $T-g$ modulo $\varpi'$ et admettant des racines $f_{i}^{\frac{1}{p^k}}$ dans $\hat A_{\infty0}^{\s}\langle T^{\e}\rangle$, tel que 
   \begin{equation}\label{e3} \hat A_{\infty0}\langle T^{\e}\rangle \{ \frac{T-g}{\varpi^{i}}\}^{\s}\cong \hat A_{\infty0}\langle T^{\e}\rangle \{ \frac{f_i}{\varpi^{i}}\}^{\s}  \end{equation}  
    En outre, selon  \cite[lemma 6.4]{S1}, le morphisme canonique 
    $ \widehat{\hat A_{\infty0}\langle T^{\e}\rangle^{\s}[ (\frac{f_i}{\varpi^{i}})^{\e}]} \to  \hat A_{\infty0}\langle T^{\e}\rangle \{ \frac{f_i}{\varpi^{i}}\}^{\s}   $ est un presque-isomorphisme: 
       \begin{equation}\label{e4} \hat A_{\infty0}\langle T^{\e}\rangle \{ \frac{f_i}{\varpi^{i}}\}^{\s a} \cong (\widehat{\hat A_{\infty0}^{\s}\langle T^{\e}\rangle[ (\frac{f_i}{\varpi^{i}})^{\e}]})^a \end{equation}
  o\`u $\hat A_{\infty0}^{\s}\langle T^{\e}\rangle[ (\frac{f_i}{\varpi^{i}})^{\e}] := {\rm{colim}}_k \, \hat A_{\infty0}^{\s}\langle T^{\e}\rangle[ (\frac{f_i}{\varpi^{i}})^{\frac{1}{p^k}}] \subset   \, \hat A_{\infty0}\langle T^{\e}\rangle$. Par ailleurs, le morphisme canonique 
    $$ \widehat{ {\rm{colim}}}_k \,\hat A_{\infty0}^{\s}\langle T^{\e}\rangle[ (\frac{f_i}{\varpi^{i}})^{\frac{1}{p^k}}]  \to
   \widehat{  {\rm{colim}}}_k \, (\widehat{\hat A_{\infty0}^{\s}\langle T^{\e}\rangle[ (\frac{f_i}{\varpi^{i}})^{\frac{1}{p^k}}]}) $$ 
   est un isomorphisme, comme on le v\'erifie imm\'ediatement par r\'eduction modulo $\varpi^n$ pour tout $n$. En combinant ceci aux formules \eqref{e3} \eqref{e4} et \eqref{m3}, on obtient
   \begin{equation}\label{e6}  \hat A_{\infty0}\langle T^{\e}\rangle \{ \frac{T-g}{\varpi^{i}}\}^{\s a} \cong  \widehat{  {\rm{colim}}}_k \, (\widehat{\hat A_{\infty0}^{\s}\langle T^{\e}\rangle[ (\frac{f_i}{\varpi^{i}})^{\frac{1}{p^k}}]})^a  \end{equation}
   $$\cong   \widehat{  {\rm{colim}}}_k \, (\hat A_{\infty0}^{\s}\langle T^{\e}, U\rangle/(\varpi^{\frac{i}{p^k}} U -  {f_i}^{\frac{1}{p^k}} ))^a .$$
            
         \smallskip Fixons $(i,k)$.    
     Comme $\hat A_{\infty0}^{\s}\langle T^{\e}, U\rangle/(\varpi^{\frac{i}{p^k}} U -  {f_i}^{\frac{1}{p^k}} )$
       ne change pas si l'on remplace $ f_i^{\frac{1}{p^k}}$  par tout \'el\'ement de $ \hat A_{\infty0}^{\s}\langle T^{\e}\rangle $ qui lui est congru modulo $ \varpi^{\frac{i}{p^k}}$, on peut le remplacer par un $f_{ik}\in  A_{j0}^{\s}\langle T^{\frac{1}{p^j}}\rangle $ pour $j$ assez grand. On peut aussi remplacer ${\varpi^{\frac{i}{p^k}}} $ par un \'el\'ement $ \varpi_{ik} \in K_{j0}$ de m\^eme norme, de sorte que
     \begin{equation}\label{e7} \hat A_{\infty0}^{\s}\langle T^{\e}, U\rangle/(\varpi^{\frac{i}{p^k}} U -  {f_i}^{\frac{1}{p^k}} ) \cong  \hat A_{\infty0}^{\s}\langle T^{\e}, U\rangle/({\varpi_{ik}} U -  {f_{ik}} ). \end{equation} 
    Par ailleurs, comme $ \hat A_{\infty0}\langle T^{\e}\rangle $ est perfecto\"{\i}de (spectrale), pour $j$ assez grand, il existe $g_k\in  A_{j0}^{\s}\langle T^{\frac{1}{p^j}}\rangle   $  tel que $g_k^{p^k}\equiv g$ modulo $\varpi$.  On peut aussi supposer qu'une uniformisante $\varpi_j$ de $K_{j0}^{\s} $ v\'erifie \begin{equation}\vert \varpi_j\vert \geq \vert\varpi'\vert^{\frac{1}{p^k}} > \vert \varpi_{ik}\vert.\end{equation}
Le morphisme canonique 
    \begin{equation}\label{e8} \widehat{{\rm{colim}}}_j\,    A_{j0}^{\s}\langle T^{\frac{1}{p^j}}, U\rangle/({\varpi_{ik}} U -  {f_{ik}} ) \to  \hat A_{\infty0}^{\s}\langle T^{\e}, U\rangle/({\varpi_{ik}} U -  {f_{ik}} )  \end{equation} est un isomorphisme, comme on le voit ais\'ement par r\'eduction modulo $\varpi_{ik}^n$ pour tout $n$.
   Enfin, rappelons qu'en vertu de \eqref{m2} et de ce que $A_{j0}^{\s}\langle T^{\frac{1}{p^j}}, U\rangle= A_{j0}\langle T^{\frac{1}{p^j}}, U\rangle_{\leq 1} $, on a 
    \begin{equation}  A_{j0}^{\s}\langle T^{\frac{1}{p^j}}, U\rangle/({\varpi_{ik}} U -  {f_{ik}} ) =  A_{j0}\langle T^{\frac{1}{p^j}}\rangle  \{ \frac{f_{ik}}{\varpi_{ik}}\}_{\leq 1}  . \end{equation}
     
     \medskip Ceci \'etabli, venons-en \`a la platitude. 
     
     Commen\c cons par l'anneau noeth\'erien $  A_{j0 }^{\s}\langle T^{\frac{1}{p^j}}, U\rangle/( \varpi_{ik} U- f_{ik})$. D'apr\`es le crit\`ere de platitude par fibres sur $A_{j0}^{\s}$, il s'agit de v\'erifier que

\smallskip  $a)$ $A_{j0}\langle T^{\frac{1}{p^j}}\rangle  \{ \frac{f_{ik}}{\varpi_{ik}}\}$ est plate sur $A_{j0}$; or, le spectre analytique de $A_{j0}$ est r\'eunion croissante des polydisques affino\"ides de rayons $ r\in \vert K_\infty^{\s\s}\vert$, et pour une telle alg\`ebre de Tate $B_r$,  $B_r\langle T^{\frac{1}{p^j}}\rangle \{ \frac{f_{ik}}{\varpi_{ik}}\}$ est m\^eme plate sur $B_r\langle T^{\frac{1}{p^j}}\rangle$ \cite[prop. 2.2.4]{Be}; et

 \smallskip $b)$ $  A_{j0}^{\s}\langle  T^{\frac{1}{p^j}}, U\rangle/(\varpi_{ik} U-  f_{ik}, \varpi_j) = A_{j0}^{\s}\langle  T^{\frac{1}{p^j}}, U\rangle/( f_{ik}, \varpi_j) $
  est plate sur $A_{j0}^{\s}/\varpi_j$: or dans  
 $A_{j0}^{\s}\langle T^{\frac{1}{p^j}}\rangle$, on a les congruences
$f_{ik}^{p^k} \equiv   f_i  \equiv T-g \equiv (T^{\frac{1}{p^k}}- g_k)^{p^k}$ modulo $\varpi'$, d'o\`u  
$ f_{ik} \equiv T^{\frac{1}{p^k}}- g_k $ modulo $\varpi_j$; ceci donne
$A_{j0}^{\s}\langle  T^{\frac{1}{p^j}}, U\rangle/( f_{ik}, \varpi_j) \cong (A_{j0}^{\s}/\varpi_j)[T^{\frac{1}{p^j}}, U]/  (T^{\frac{1}{p^k}} - g_k), $  qui 
est libre sur $A_{j0}^{\s}/\varpi_j$. 

\smallskip Ainsi $ A_{j0}\langle T^{\frac{1}{p^j}}\rangle  \{ \frac{f_{ik}}{\varpi_{ik}}\}_{\leq 1} $ est plate sur $A_{j0}^{\s}$.  Comme $ A_{j0}^{\s}$ est locale, le quotient par ${\frak m}_{A_{j0}^{\s}}$ de  $A_{j0}^{\s}\langle  T^{\frac{1}{p^j}}, U\rangle/(\varpi_{ik} U-  f_{ik})$ est $k[T^{\frac{1}{p^j}}, U]/ {\bar  f}_{ik}$ o\`u  ${\bar  f}_{ik}$ est l'image de $f_{ik}$ dans $k[ T^{\frac{1}{p^j}}]$, et comme ${\bar  f}_{ik}$ n'est pas inversible dans $k[ T^{\frac{1}{p^j}}]$ puisque ${\bar  f}_{ik}^{p^k} \equiv T-\bar g$, ce quotient est non nul. Donc  {\it $ A_{j0}\langle T^{\frac{1}{p^j}}\rangle  \{ \frac{f_{ik}}{\varpi_{ik}}\}_{\leq 1} $ est fid\`element plate sur $A_{j0}^{\s}$}.

\smallskip
D\`es lors, il en est de m\^eme de  $\,{\rm{colim}}_j A_{j0}\langle T^{\frac{1}{p^j}}\rangle  \{ \frac{f_{ik}}{\varpi_{ik}}\}_{\leq 1}\,$ sur $\,{\rm{colim}}_j A_{j0}^{\s} = A_{\infty 0}^{\s}$, c'est-\`a-dire sur chaque $A_{j0}^{\s}$.  D'apr\`es le lemme \ref{L1}, la colimite compl\'et\'ee est encore fid\`element plate sur $A_{\infty 0}^{\s}$, donc  {\it $ \hat A_{\infty0}^{\s}\langle T^{\e}, U\rangle/(\varpi^{\frac{i}{p^k}} U -  {f_i}^{\frac{1}{p^k}} )$ est fid\`element plate sur $A_{\infty 0}^{\s}$} d'apr\`es \eqref{e7} et \eqref{e8}. 
 
 De m\^eme, la colimite (en $k$) compl\'et\'ee est fid\`element plate sur $A_{\infty 0}^{\s}$. Compte tenu de \eqref{e6}, on obtient que  {\it $ \hat A_{\infty0}\langle T^{\e}\rangle \{ \frac{T-g}{\varpi^{i}}\}^{\s a}$ est fid\`element plate sur $A_{\infty 0}^{\s a}$} dans le cadre $(K_\infty^{\s}, K_\infty^{\s\s})$, ou ce qui revient au m\^eme, dans le cadre $(A_{\infty0}^{\s}, A_{\infty0}^{\s\s}= K_\infty^{\s\s}A_{\infty0}^{\s})$ \cite[\S 1.5]{A}. Dans ce dernier cadre, l'adjoint \`a gauche $( \,)_{!!}$ du foncteur de localisation $(\,)^a$ pour les $A_{\infty0}^{\s}$-alg\`ebres
 %\footnote{donn\'e par $\frak A \mapsto (A_{\infty0}^{\s a}\cdot 1 + K_\infty^{\s\s}\frak A) \subset \frak A$.}
  respecte la platitude fid\`ele \cite[3.1.3]{GR1}, donc $\hat A_{\infty0}\langle T^{\e}\rangle \{ \frac{T-g}{\varpi^{i}}\}^{\s a}_{!!}   $ est fid\`element plate sur $A_{\infty0}^{\s }$. Ces alg\`ebres forment un syst\`eme inductif (en $i$). Appliquant derechef le lemme \ref{L1}, on obtient que la colimite compl\'et\'ee est fid\`element plate sur $A_{\infty0}^{\s}$. En vertu de \eqref{e1}, on conclut que {\it $\hat A_{\infty\infty}^{\s a}$ est fid\`element plate sur $A_{\infty0}^{\s a}$}. \end{proof}

  \subsection{} Bien que cela ne soit pas n\'ecessaire pour la suite, expliquons comment en d\'eduire de la puret\'e:

\begin{cor}\label{C2} Pour tout $(i,j, k)$,  $A_{j0}^{\s}\inj A_{jk}^{\s}$ est pur. 
\end{cor}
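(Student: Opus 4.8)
The plan is to deduce purity of $A_{j0}^{\s}\inj A_{jk}^{\s}$ from the almost faithful flatness of Theorem \ref{T3} by a "spreading out and descending" argument that transfers a statement about the non-noetherian limit $\hat A_{\infty\infty}^{\s}$ back down to the finite, noetherian layer. First I would observe that it suffices to treat $(i,j,k)=(\,\cdot\,,0,k)$: since $A_{j0}^{\s}$ is faithfully flat (indeed free) over $A_{00}^{\s}=A$ and $A_{jk}^{\s}=A_{j0}^{\s}\otimes_{A_{00}^{\s}}A_{0k}^{\s}$ up to the integral closure correction, purity of $A\inj A_{0k}^{\s}$ plus faithful flatness of $A\inj A_{j0}^{\s}$ yields purity of $A_{j0}^{\s}\inj A_{jk}^{\s}$ by base change (purity is preserved under faithfully flat base change). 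So the heart of the matter is purity of $A\inj A_{0k}^{\s}$, equivalently of $A\inj A_{jk}^{\s}$ for any fixed $j$.

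Next I would pass to the perfectoid tower. By Theorem \ref{T3}, in the setting $(K_\infty^{\s},K_\infty^{\s\s})$ the algebra $\hat A_{\infty\infty}^{\s}$ is almost faithfully flat over $A_{\infty0}^{\s}$; in particular the composite $A_{j0}^{\s}\to A_{\infty0}^{\s}\to \hat A_{\infty\infty}^{\s}$ is almost faithfully flat, since $A_{\infty0}^{\s}$ is honestly faithfully flat over $A_{j0}^{\s}$ (it is free, and $\hat A_{\infty0}^{\s}$ is faithfully flat over $A_{j0}^{\s}$ by Lemma \ref{L1}, but here I only need the non-completed version for the colimit). Almost faithful flatness gives that for every finitely generated $A_{j0}^{\s}$-module $M$ the map $M\to M\otimes_{A_{j0}^{\s}}\hat A_{\infty\infty}^{\s}$ is almost injective, and moreover $R\cap \frak K=0$-type statements hold modulo the idempotent ideal $K_\infty^{\s\s}A_{\infty0}^{\s}$; this is exactly the shape in which Lemma \ref{L2} applies. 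Concretely, to prove $A\inj A_{jk}^{\s}$ is pure it is enough — since $A$ is noetherian and by the usual criterion (purity $\Leftrightarrow$ for all ideals $I$, $I=A\cap IA_{jk}^{\s}$, which for $A$ local reduces to checking $1\notin \frak m A_{jk}^{\s}$ is not forced, i.e. $A\cap IA_{jk}^{\s}\subseteq I$) — to show that $A\cap IA_{jk}^{\s}\subseteq I$ for each ideal $I$. Tensoring up, $A\cap IA_{jk}^{\s}$ maps into $A\cap I\hat A_{\infty\infty}^{\s}$ (note $A_{jk}^{\s}\subseteq A_{\infty\infty}^{\s}$ and, after $p$-adic completion, the image of $A\cap IA_{jk}^{\s}$ lands in $A\cap I\hat A_{\infty\infty}^{\s}$ because $A$ is $p$-adically complete), and then almost faithful flatness over $A_{\infty0}^{\s}$ — which is honestly faithfully flat over $A$ after inverting nothing, via $A\to A_{\infty0}^{\s}$ — gives $A\cap I\hat A_{\infty\infty}^{\s}$ is contained in $(I+\frak K)\cap A$ where $\frak K=K_\infty^{\s\s}A_{\infty0}^{\s}$ is idempotent; applying Lemma \ref{L2} with $R=A$, $\frak J=\frak m$ (or any ideal containing the relevant part) kills the almost-ambiguity and forces $A\cap IA_{jk}^{\s}\subseteq I$.

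The main obstacle, and the step requiring care, is the bookkeeping between the \emph{almost} category and honest modules: Theorem \ref{T3} only gives almost faithful flatness, so one gets $M\to M\otimes \hat A_{\infty\infty}^{\s}$ injective only up to $K_\infty^{\s\s}$-torsion, and one must argue that this ambiguity is harmless for the noetherian ring $A$. This is precisely what Lemma \ref{L2} is engineered for — it converts "$s\in A$ lies in an idempotent ideal $\frak K\subseteq \frak J S$" into "$(1-r)s=0$ for some $r\in\frak J$", hence $s=0$ when $A$ is local and $\frak J=\frak m$. So the argument is: (i) reduce to $A$ local and to checking $A\cap IA_{jk}^{\s}\subseteq I$ for ideals $I$; (ii) embed $A_{jk}^{\s}\hookrightarrow\hat A_{\infty\infty}^{\s}$ and use faithful flatness of $A\to A_{\infty0}^{\s}$ together with almost faithful flatness of $A_{\infty0}^{\s}\to \hat A_{\infty\infty}^{\s}$ (Theorem \ref{T3}) to get $A\cap I\hat A_{\infty\infty}^{\s}\subseteq I+\frak K$ with $\frak K$ idempotent and $\frak K\subseteq K_\infty^{\s\s}\hat A_{\infty\infty}^{\s}$; (iii) invoke Lemma \ref{L2} to absorb $\frak K$ and conclude $A\cap IA_{jk}^{\s}=I$, i.e. purity; (iv) descend/ascend along the free extension $A\to A_{j0}^{\s}$ to get purity of $A_{j0}^{\s}\inj A_{jk}^{\s}$ for all $(i,j,k)$. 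One small point to verify along the way is that $IA_{jk}^{\s}$ and $I\hat A_{\infty\infty}^{\s}$ interact well with $p$-adic completion, which follows from Lemma \ref{L1} applied to the (faithfully) flat algebras in the tower and the fact that $A/I$ is a finitely generated $A$-module.
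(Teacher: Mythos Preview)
Your overall strategy---combine Theorem~\ref{T3} with Lemma~\ref{L2} to pass from almost purity to honest purity---is the same as the paper's, but the implementation has a genuine gap in step~(iii). Almost faithful flatness of $A_{\infty0}^{\s}\to\hat A_{\infty\infty}^{\s}$ does \emph{not} give $A\cap I\hat A_{\infty\infty}^{\s}\subseteq I+\frak K$; what it gives is that for $s\in A\cap I\hat A_{\infty\infty}^{\s}$ one has $\frak K\cdot s\subseteq IA_{\infty0}^{\s}$, i.e.\ the class of $s$ in $A_{\infty0}^{\s}/IA_{\infty0}^{\s}$ is \emph{annihilated by} $\frak K$, not that it lies in $\frak K$. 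Consequently your proposed application of Lemma~\ref{L2} with $\frak J=\frak m_A$ does not even satisfy the hypothesis $\frak K\subseteq\frak J S$: for instance $\zeta_p-1\in K_\infty^{\s\s}$ but $\zeta_p-1\notin\frak m_A A_{\infty0}^{\s}$ (evaluate at $T_i=0$ to see that $K_\infty^{\s}\cap\frak m_A A_{\infty0}^{\s}\subseteq pK_\infty^{\s}$). The right move is to take $\frak J$ to be the transporter $(I:_A s)$: by flatness $(I:_A s)A_{\infty0}^{\s}=(IA_{\infty0}^{\s}:s)\supseteq\frak K$, and then Lemma~\ref{L2} (with $R=A$, $S=A_{\infty0}^{\s}$, and $p\in R\cap\frak K$) forces $(I:_A s)=A$, i.e.\ $s\in I$. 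The paper does essentially this but one level up: it works over the coherent ring $A_{\infty0}^{\s}$, takes $\frak J$ to be the annihilator of a finitely presented test module, uses coherence to descend this annihilator to some noetherian $A_{j0}^{\s}$, and applies Lemma~\ref{L2} there.

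There is also a problem with your bookkeeping in steps~(i) and~(iv). The claimed identification $A_{jk}^{\s}=A_{j0}^{\s}\otimes_A A_{0k}^{\s}$ ``up to integral closure correction'' is not justified, and more seriously, purity of $A\hookrightarrow A_{jk}^{\s}$ does \emph{not} imply purity of the second factor $A_{j0}^{\s}\hookrightarrow A_{jk}^{\s}$ (purity of a composite only gives purity of the first map). The paper avoids this by proving directly that $A_{\infty0}^{\s}\hookrightarrow A_{\infty\infty}^{\s}$ is pure; since $A_{j0}^{\s}\to A_{\infty0}^{\s}$ is faithfully flat and the composite $A_{j0}^{\s}\to A_{\infty\infty}^{\s}$ factors through $A_{jk}^{\s}$, purity of $A_{j0}^{\s}\hookrightarrow A_{jk}^{\s}$ follows for all $(j,k)$ at once. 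Alternatively, you could simply run your (corrected) argument with $A$ replaced by the noetherian local ring $A_{j0}^{\s}$ throughout, which makes the reduction to $j=0$ unnecessary.
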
  

\begin{proof} Il suffit de montrer que $A_{\infty\infty}^{\s}$ est pur sur $A_{\infty0}^{\s}$, puisque ce dernier est pur sur chaque $A_{j0}^{\s}$. 

Comme les $A_{j0}^{\s}$ sont finis plats les uns sur les autres, $A_{\infty0}^{\s}$ est coh\'erent \cite[I, \S 2, ex. 12]{B}.
  Soient alors $M$ un $A_{\infty0}^{\s}$-module de pr\'esentation finie, et $N$ un quelconque sous-module de type fini de $\;{\ker}(M \to M\otimes_{A_{\infty0}^{\s}} A_{\infty\infty}^{\s})$. Comme $A_{\infty0}^{\s}$ est coh\'erent, $N$ est de pr\'esentation finie. Comme $\hat A_{\infty\infty}^{\s}$ est presque fid\`element plat sur $A_{\infty0}^{\s}$ (th. \ref{T3}), donc presque pur, $ A_{\infty\infty}^{\s}$ est aussi presque pur sur $A_{\infty0}^{\s}$, donc $N$ est presque nul: son annulateur $ \frak I$ contient $K_\infty^{\s\s}$. 
  Or $\frak I$ est un id\'eal de pr\'esentation finie, donc provient d'un id\'eal $\frak I_j$ de l'un des $A_{j0}^{\s}: \; \frak I = \frak I_j\otimes_{A_{i,j,0}^{\s}}A_{\infty0}^{\s}$. Comme $A_{\infty0}^{\s}$ est fid\`element plat sur l'anneau local $A_{j0}^{\s}$, on conclut par le lemme \ref{L2} (avec $\frak k=A_{\infty0}^{\s\s}$ et $\frak J = \frak I_j$)  que $\frak I_j=  A_{j0}^{\s}$  et que $ N=0$. Donc $M \to M\otimes_{A_{\infty0}^{\s}} A_{\infty\infty}^{\s}$ est injectif.
 \end{proof}

        \subsubsection{\it Remarque.}\label{re} 
       \smallskip     Si $g$ est sans facteur carr\'e (dans l'anneau factoriel $A[\frac{1}{p}]$) et non divisible par les $T_h$, les $A_{jk}$ et $A_{jk}^{\s}$ sont des anneaux normaux.  
      
      \smallskip    Si $\pm g$ n'est pas produit d'un mon\^ome en les $T_h$ et d'une puissance $p$-i\`eme dans $A$, on peut montrer, en utilisant le lemme de Capelli-Vahlen, que $A_{jk}$ est int\`egre. Si $\pm g$ mod. $p$ n'est pas produit d'un mon\^ome en les $T_h$ et d'une puissance $p$-i\`eme dans $A/p$, on peut montrer que la norme de $A_{jk}$ est multiplicative.
      
       \smallskip    En g\'en\'eral, m\^eme sous les hypoth\`eses de $(1)$ et $(2)$, il est tr\`es difficile de d\'eterminer les anneaux $A_{jk}^{\s}$, et plus encore leurs propri\'et\'es relatives\footnote{il n'est d\'ej\`a pas facile de d\'eterminer $Q(A_{jk})^{\s}$ \`a cause de la ramification f\'eroce \'eventuelle; \cf \cite{W} pour une approche algorithmique - c'est dans cet article oubli\'e qu'est introduite la terminologie {``f\'eroce"} (fierce).}; voir \cite{Ko} pour le cas $j, k\leq 1$ et \cite{R} pour le cas $j\leq 1, k >1$.
       
            \subsubsection{\it Remarque.}\label{re} Dans la suite, nous n'aurons besoin de ce th\'eor\`eme que modulo les puissance de $p$, ce qui permet de se dispenser des subtilit\'es sur les compl\'etions. 
     
 %   \smallskip\noindent $(4)$ Si $g'$ est un autre \'el\'ement de $A$ et $\hat A'_{\infty\infty}$ la $\hat A_{\infty 0}$-alg\`ebre perfecto\"{\i}de associ\'ee comme ci-dessus \`a $g'$, alors $\hat A_{\infty\infty}\hat\otimes_{\hat A_{\infty 0}}\hat A'_{\infty\infty}$ est perfecto\"{\i}de et $(\hat A_{\infty\infty}\hat\otimes_{\hat A_{\infty 0}}\hat A'_{\infty\infty})^{\s}$ est presque isomorphe \`a $\hat A_{\infty\infty}^{\s}\hat\otimes_{{\hat A_{\infty 0}^{\s}}} \hat A_{\infty\infty}^{'\s}$ (\cite[prop. 6.18]{S1}, voir aussi \cite[prop. 3.3.4]{A}), et contient une suite compatible de racines $p^m$-i\`emes de $gg'$. D'apr\`es le th. pr\'ec\'edent,  $\hat A_{\infty\infty}^{\s}$ et $ \hat A_{\infty\infty}^{'\s} $ sont presque fid\`element plates sur $A_{\infty 0}^{\s}$, c'est-\`a-dire sur chaque $A_{jk}^{\s}$, donc $\hat A_{\infty\infty}^{\s}\otimes_{A_{\infty 0}^{\s}} \hat A_{\infty\infty}^{'\s} $ l'est aussi, de m\^eme que son compl\'et\'e d'apr\`es le lemme \ref{L1}. Ainsi $(\hat A_{\infty\infty}\hat\otimes_{\hat A_{\infty 0}}\hat A'_{\infty\infty})^{\s}$ est presque fid\`element plate sur $A_{\infty 0}^{\s}$.

\section{Application du lemme d'Abhyankar perfecto\"{\i}de.}

 \subsection{}\label{lA} Soit $B$ une extension finie de $A$, et prenons $g\in A\setminus pA$ de sorte que $B[\frac{1}{pg}]$ soit \'etale sur $A[\frac{1}{pg}]$. 
  Pour prouver la puret\'e de $A \inj B$, on peut supposer $B$ int\`egre \cite[lemma 3]{H1}, et m\^eme que $B[\frac{1}{pg}]$ soit une extension galoisienne de $A[\frac{1}{pg}]$. Comme $A$ est int\'egralement ferm\'e dans $A[\frac{1}{p}]$, $A/p^m \to B/p^m$ est injectif pour tout $m\in \N$.
 
 La $\hat K_\infty$-alg\`ebre $\hat A_{\infty\infty}$ est perfecto\"{\i}de \cite[\S 3.6.2]{A}. Notons 
 $\hat A_{\infty\infty!!}^{\s }$ la $A_{\infty0}^{\s}$-alg\`ebre d\'eduite de $\hat A_{\infty\infty}^{\s}$ par application de la localisation dans le cadre  $(A_{\infty0}^{\s},   K_\infty^{\s\s}A_{\infty0}^{\s})$ suivie de son adjoint \`a gauche, comme ci-dessus. Comme cette op\'eration respecte la platitude fid\`ele, il d\'ecoule du th. \ref{T3} que {\it $\hat A_{\infty\infty!!}^{\s }$ est fid\`element plate sur $A_{\infty0}^{\s}$ donc aussi sur $A$} (et en particulier sans $A$-torsion).  D'apr\`es la formule (2.2.26) de \cite{GR1}, c'est la sous-alg\`ebre $A_{\infty0}^{\s}+  K_\infty^{\s\s}\hat A_{\infty\infty}^{\s}$ de $\hat A_{\infty\infty}^{\s}$; en particulier, elle est stable par multiplication par tout \'el\'ement de $(\varpi g)^{\e}$.

 \subsection{}\label{Bzero}   Consid\'erons la fermeture int\'egrale $\sB^{\s}$ de $ \, g^{\f}\hat A_{\infty\infty}^{\s} $ dans la $\hat A_{\infty\infty}[\frac{1}{g}]$-alg\`ebre \'etale finie $B\otimes_A \hat A_{\infty\infty}[\frac{1}{g}]$ (galoisienne si l'on veut). 
  Le morphisme canonique $B \otimes_A \hat A_{\infty\infty!!}^{\s } \to B\otimes_A \hat A_{\infty\infty}[\frac{1}{g}]$ se factorise \`a travers un morphisme $B \otimes_A \hat A_{\infty\infty!!}^{\s } \to \sB^{\s}$ (qui devient un isomorphisme apr\`es inversion de $pg$).

 Voyons $\hat A_{\infty\infty}^{\s} $ et $\sB^{\s}$ comme des $K_\infty^{\s}[T^{\e}]$-alg\`ebres via $T^{\frac{1}{p^h}}\mapsto g^{\frac{1}{p^h}}$. Un fragment du lemme d'Abhyankar perfecto\"{\i}de \cite[th. 0.3.1]{A}\footnote{le cas galoisien suffit.} s'\'enonce:

\begin{thm}\label{T4} Dans le cadre $(K_\infty^{\s}[T^{\e}], T^{\e}K_\infty^{\s\s}[T^{\e}])$, et pour tout $m\in \N$, le morphisme $\hat A_{\infty\infty}^{\s }/p^m \to \sB^{\s}/p^m$ est presque fid\`element plat, donc presque pur. \qed\end{thm}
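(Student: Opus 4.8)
The plan is to read Theorem~\ref{T4} as a direct specialisation of the perfectoid Abhyankar lemma \cite[th.~0.3.1]{A}; the substantive content is hidden in that lemma, and what remains here is to match the present data with those of \loccit\ and to pass between the two relevant almost-settings.

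First I would apply \loccit\ with perfectoid base the $\hat K_\infty$-algebra $\hat A_{\infty 0}$ (the algebra $\hat A_\infty$ of \cite[ex.~3.2.3(2)]{A}), with the element $g\in A\subset\hat A_{\infty 0}^{\s}$, and with the finite étale $\hat A_{\infty 0}[\frac1g]$-algebra $B\otimes_A\hat A_{\infty 0}[\frac1g]$, which is the base change of $B[\frac{1}{pg}]/A[\frac{1}{pg}]$ and may be taken Galois by \ref{lA} (so that the Galois case of the lemma suffices). The lemma asserts that after completing the colimit adjoining the system $g^{\e}$ of $p$-power roots of $g$ --- which yields precisely the perfectoid algebra $\hat A_{\infty\infty}$ of \cite[\S 3.6.2]{A} --- the integral closure of $g^{\f}\hat A_{\infty\infty}^{\s}$ in $B\otimes_A\hat A_{\infty\infty}[\frac1g]$, that is, the algebra $\sB^{\s}$ of \ref{Bzero}, has the property that $\hat A_{\infty\infty}^{\s}/p^m\to\sB^{\s}/p^m$ is almost finite étale and almost faithfully flat, where ``almost'' refers to the ideal generated by the elements $(\zeta_{p^j}-1)g^{\frac{1}{p^j}}$.

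Next I would reconcile this with the frame in the statement. Regarding $\hat A_{\infty\infty}^{\s}$ and $\sB^{\s}$ as $K_\infty^{\s}[T^{\e}]$-algebras via $T^{\frac{1}{p^h}}\mapsto g^{\frac{1}{p^h}}$ as in \ref{Bzero}, the idempotent ideal $T^{\e}K_\infty^{\s\s}[T^{\e}]$ maps onto $g^{\e}K_\infty^{\s\s}[g^{\e}]\subset\hat A_{\infty\infty}^{\s}$; since each $\zeta_{p^j}-1$ lies in $K_\infty^{\s\s}$, this ideal contains the one governing the almost structure delivered by \loccit, so the two conclusions above hold \emph{a fortiori} in the frame $(K_\infty^{\s}[T^{\e}],\,T^{\e}K_\infty^{\s\s}[T^{\e}])$.

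Finally, almost faithful flatness gives almost purity: a faithfully flat ring map $R\to S$ is pure because $M\otimes_R S\to M\otimes_R S\otimes_R S$ is split by the multiplication $S\otimes_R S\to S$, and this formal argument survives in almost algebra. The point deserving a word is faithfulness of the almost flat module $\sB^{\s}/p^m$; it comes from the fact that $B$ may be taken a domain finite over $A$, so that $B[\frac1{pg}]$ is finite locally free of constant rank $[\,\mathrm{Frac}(B):\mathrm{Frac}(A)\,]\geq1$ over the connected scheme $\Spec A[\frac1{pg}]$, whence the étale algebra $B\otimes_A\hat A_{\infty\infty}[\frac1g]$ --- and therefore $\sB^{\s}/p^m$ --- does not vanish at any point of $\Spec\hat A_{\infty\infty}^{\s}/p^m$; this is exactly what the faithful-flatness clause of \loccit\ encodes. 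The real obstacle lies entirely inside the perfectoid Abhyankar lemma --- the tilting equivalence together with Scholze's almost purity theorem applied over the tubular neighbourhoods of $\{T=g\}$ in the analytic spectrum --- which I invoke here as a black box.
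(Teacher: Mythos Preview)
Your proposal is correct and follows exactly the paper's approach: the paper states Theorem~\ref{T4} with a \qed and no proof, presenting it simply as ``un fragment du lemme d'Abhyankar perfecto\"{\i}de \cite[th.~0.3.1]{A}'' (with the footnote that the Galois case suffices). Your elaboration on identifying the data, reconciling the almost-frames, and deducing almost purity from almost faithful flatness merely unpacks what the paper leaves implicit in the citation.
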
 

\begin{cor}\label{C3} Dans le cadre $(K_\infty^{\s}+ K_\infty^{\s\s}[T^{\e}], T^{\e}K_\infty^{\s\s}[T^{\e}])$, et pour tout $m\in \N$, le morphisme $\hat A_{\infty\infty!!}^{\s }/p^m \to (B  \otimes_A \hat A_{\infty\infty!!}^{\s })/p^m$ est presque pur.\end{cor}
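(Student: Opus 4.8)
Le plan est de se ramener au th\'eor\`eme \ref{T4} via le morphisme $\beta:B\otimes_A\hat A_{\infty\infty!!}^{\s}\to\sB^{\s}$ de \ref{Bzero}, en exploitant le principe formel suivant : \emph{si un compos\'e $R\to S\to T$ de morphismes de presque-alg\`ebres est pur, alors $R\to S$ l'est}. Le morphisme $\beta$ \'etant un morphisme de $\hat A_{\infty\infty!!}^{\s}$-alg\`ebres, le compos\'e $\hat A_{\infty\infty!!}^{\s}\to B\otimes_A\hat A_{\infty\infty!!}^{\s}\to\sB^{\s}$ n'est autre que l'inclusion canonique $\iota:\hat A_{\infty\infty!!}^{\s}\inj\hat A_{\infty\infty}^{\s}\inj\sB^{\s}$. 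D'apr\`es le principe ci-dessus, il suffit donc de montrer que, dans le cadre $(K_\infty^{\s}+K_\infty^{\s\s}[T^{\e}],\,T^{\e}K_\infty^{\s\s}[T^{\e}])$, le morphisme $\iota/p^m:\hat A_{\infty\infty!!}^{\s}/p^m\to\sB^{\s}/p^m$ est presque pur.

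Pour cela, j'\'etablirais deux points. D'abord, dans ce cadre, $(\hat A_{\infty\infty!!}^{\s}/p^m)^a\to(\hat A_{\infty\infty}^{\s}/p^m)^a$ est un isomorphisme. En effet, rappelons que $\hat A_{\infty\infty!!}^{\s}=A_{\infty0}^{\s}+K_\infty^{\s\s}\hat A_{\infty\infty}^{\s}$ ; c'est bien une $\big(K_\infty^{\s}+K_\infty^{\s\s}[T^{\e}]\big)$-alg\`ebre via $T^{\frac{1}{p^h}}\mapsto g^{\frac{1}{p^h}}$, puisque l'image de $K_\infty^{\s}$ tombe dans $A_{\infty0}^{\s}$ et celle de $K_\infty^{\s\s}[T^{\e}]$ dans $K_\infty^{\s\s}\hat A_{\infty\infty}^{\s}$ --- c'est l\`a la raison du choix de ce cadre, car $g^{\frac{1}{p^h}}\notin\hat A_{\infty\infty!!}^{\s}$ en g\'en\'eral, de sorte que $\hat A_{\infty\infty!!}^{\s}$ n'est pas une $K_\infty^{\s}[T^{\e}]$-alg\`ebre. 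Comme $K_\infty^{\s\s}\hat A_{\infty\infty}^{\s}\subset\hat A_{\infty\infty!!}^{\s}$, le quotient $Q:=\hat A_{\infty\infty}^{\s}/\hat A_{\infty\infty!!}^{\s}$ est annul\'e par $K_\infty^{\s\s}$ ; la suite exacte longue des $\operatorname{Tor}(-,\Z/p^m)$ appliqu\'ee \`a $0\to\hat A_{\infty\infty!!}^{\s}\to\hat A_{\infty\infty}^{\s}\to Q\to0$ montre que le noyau et le conoyau de $\hat A_{\infty\infty!!}^{\s}/p^m\to\hat A_{\infty\infty}^{\s}/p^m$ sont des sous-quotients de $Q$, donc annul\'es par $K_\infty^{\s\s}$, a fortiori par l'id\'eal idempotent $T^{\e}K_\infty^{\s\s}[T^{\e}]$ (dont l'image dans $\hat A_{\infty\infty}^{\s}$ est contenue dans $K_\infty^{\s\s}\hat A_{\infty\infty}^{\s}$). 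Ensuite, la presque-puret\'e de $\hat A_{\infty\infty}^{\s}/p^m\to\sB^{\s}/p^m$ fournie par le th. \ref{T4} dans le cadre $(K_\infty^{\s}[T^{\e}],\,T^{\e}K_\infty^{\s\s}[T^{\e}])$ reste valable dans le cadre de l'\'enonc\'e : la presque-puret\'e d'un morphisme d'anneaux, relativement \`a un id\'eal idempotent fix\'e \`a travers lequel op\`ere l'anneau de base, ne d\'epend que du morphisme et de cet id\'eal, et $K_\infty^{\s}+K_\infty^{\s\s}[T^{\e}]$ est un sous-anneau de $K_\infty^{\s}[T^{\e}]$ contenant $T^{\e}K_\infty^{\s\s}[T^{\e}]$, avec des fl\`eches structurales compatibles.

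Il ne reste alors qu'\`a composer : dans la cat\'egorie des presque-alg\`ebres du cadre consid\'er\'e, $(\iota/p^m)^a$ se factorise en $(\hat A_{\infty\infty!!}^{\s}/p^m)^a\iso(\hat A_{\infty\infty}^{\s}/p^m)^a\to(\sB^{\s}/p^m)^a$, compos\'e d'un isomorphisme et d'un morphisme pur, donc est pur ; $\iota/p^m$ est ainsi presque pur, et avec lui le morphisme $\hat A_{\infty\infty!!}^{\s}/p^m\to(B\otimes_A\hat A_{\infty\infty!!}^{\s})/p^m$ de l'\'enonc\'e. L'obstacle principal n'est pas de nature profonde : il r\'eside dans la justification soigneuse du changement de cadre --- l'ind\'ependance de la presque-puret\'e vis-\`a-vis de l'anneau de base une fois l'id\'eal ``presque nul'' fix\'e --- et dans la manipulation des presque-alg\`ebres modulo $p^m$ ; l'essentiel du contenu est dans les th\'eor\`emes \ref{T3} et \ref{T4}.
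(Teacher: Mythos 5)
Votre d\'emonstration suit exactement la m\^eme voie que celle du texte~: r\'eduction par factorisation \`a la presque-puret\'e de $\hat A_{\infty\infty!!}^{\s}/p^m\to\sB^{\s}/p^m$, obtenue en combinant le presque-isomorphisme $\hat A_{\infty\infty!!}^{\s}\to\hat A_{\infty\infty}^{\s}$ dans le cadre de l'\'enonc\'e avec le transfert de l'\'enonc\'e du th.~\ref{T4} \`a ce cadre. Vous explicitez au passage deux points que le texte laisse implicites (l'argument par $\operatorname{Tor}$ montrant que le presque-isomorphisme survit \`a la r\'eduction modulo $p^m$, et la v\'erification que $\hat A_{\infty\infty!!}^{\s}$ est bien une alg\`ebre sur $K_\infty^{\s}+K_\infty^{\s\s}[T^{\e}]$), ce qui est bienvenu mais ne change pas la nature de l'argument.
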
 

En effet, l'\'enonc\'e du th\'eor\`eme, qui porte sur les $\hat A_{\infty\infty}^{\s }/p^m$-modules, \'equivaut \`a l'\'enonc\'e analogue dans le cadre $(K_\infty^{\s}+ K_\infty^{\s\s}[T^{\e}], T^{\e}K_\infty^{\s\s}[T^{\e}])$ puisque la presque-nullit\'e y a le m\^eme sens; et dans ce cadre-ci, $\hat A_{\infty\infty!!}^{\s }$ est presque isomorphe \`a  $\hat A_{\infty\infty}^{\s }$. Donc $\hat A_{\infty\infty!!}^{\s }/p^m \to \sB^{\s }/p^m$ y est presque pur, de m\^eme que $\hat A_{\infty\infty!!}^{\s }/p^m \to B  \otimes_A \hat A_{\infty\infty!!}^{\s }/p^m $ \`a travers lequel $\hat A_{\infty\infty!!}^{\s }/p^m \to \sB^{\s }/p^m$  se factorise. \qed   

  \subsection{} Fixons provisoirement $m\geq 2$, notons avec une barre la r\'eduction modulo $p^m$ pour all\'eger, et d\'emontrons que la classe $\,e\,$ de l'extension $0\to \bar A \to \bar B \to \bar B/\bar A\to 0$ est nulle en combinant les deux th\'eor\`emes pr\'ec\'edents, suivant une id\'ee de B. Bhatt \cite{Bh}. 
  
  Comme $\overline{\hat A_{\infty\infty!!}^{\s }}$ est (fid\`element) plat sur $\bar A$ et $\bar B$ de pr\'esentation finie sur $\bar A$, on a
$${\rm{Ext}}^1(\bar B/\bar A, \bar A)\otimes_{\bar A} \overline{\hat A_{\infty\infty!!}^{\s }}  = {\rm{Ext}}^1((\bar B  \otimes_{\bar A}  \overline{\hat A_{\infty\infty!!}^{\s }})/  \overline{\hat A_{\infty\infty!!}^{\s }}\,, \, \overline{\hat A_{\infty\infty!!}^{\s }}).$$ Le corollaire \ref{C3} joint au lemme \ref{L10} implique que  $\, e \otimes 1\,$ 
 est annul\'e par $g^{\e}K_\infty^{\s\s}$. 
  Appliquons alors le lemme \ref{L2} avec $$R=\bar A, \;{\frak J} = {\rm{ Ann}}_{\bar A}  \,\bar Ae,\; S=  \overline{\hat A_{\infty\infty!!}^{\s }} , \;\frak K= g^{\e}K_\infty^{\s\s} \overline{\hat A_{\infty\infty!!}^{\s }}.$$  Notons que ${\frak J} S = {\rm{ Ann}}_{ \overline{\hat A_{\infty\infty!!}^{\s }}}  \, \overline{\hat A_{\infty\infty!!}^{\s }} (e\otimes 1)$ puisque $ \overline{\hat A_{\infty\infty!!}^{\s }}$ est plat sur $\bar A$, de sorte que ${\frak J} S$ contient $\frak K$.
 Puisque $R\cap \frak K \neq 0$ (il contient la classe de $pg$), on conclut que ${\frak J}= R$, c'est-\`a-dire $\, e = 0$. 
 
  \subsection{} On a donc obtenu l'existence d'une r\'etraction de $A/p^m\to B/p^m $ pour tout $m$. Il en est donc de m\^eme pour $  A/(p^m, T_{\leq n}^{p^m}) \to   B/(p^m, T_{\leq n}^{p^m})$. Un argument de type Mittag-Leffler d\^u \`a Hochster \cite[p. 30]{H1}, bas\'e sur le fait que ces r\'etractions forment un torseur sous un $A/p^m$-module artinien, permet de conclure que $A\to B$ admet une r\'etraction.

  \section{Alg\`ebres de Cohen-Macaulay.}

 \subsection{}\label{mp} Soient $B$ un anneau local noeth\'erien de caract\'eristique r\'esiduelle $p$, $\frak m_B$ son id\'eal maximal, et $\underline{x} = (x_1, \ldots, x_d)$ une suite s\'ecante maximale ($d= \dim B$).   Soit $C$ une extension non n\'ecessairement noeth\'erienne de $B$. 
  
  \begin{defn} \begin{enumerate} \item On dit que $C$ est {\emph{de Cohen-Macaulay pour $(B,\underline{x} )$}} si $C \neq \frak m_B C$ et si $\underline{x}$ devient une suite r\'eguli\`ere dans $C$.  
  
  \item On dit que $C$ est une {\emph{$B$-alg\`ebre de Cohen-Macaulay}}\footnote{``big Cohen-Macaulay algebra" ou ``balanced big Cohen-Macaulay algebra" dans la litt\'erature anglo-saxonne.} si elle est de Cohen-Macaulay pour $(B,\underline{x} )$ pour toute suite s\'ecante maximale $\underline{x}$. 
  
  \item Supposons que $C$ contienne une suite compatible de racines $p^m$-i\`emes d'un \'el\'ement $\pi$ non diviseur de z\'ero dans $C$. Suivant P. Roberts, on dit que $C$ est {\emph{presque de Cohen-Macaulay pour $(B,\underline{x}, \pi^{\e})$}} si dans le cadre $(C, \pi^{\e} C)$, $C$ n'est pas presque \'egale \`a $\frak m_B C$ (\ie $\frak m_B C$ ne contient pas  $\pi^{\e}$) et si $\underline{x}$ devient presque une suite r\'eguli\`ere dans $C$ (\ie pour tout $i= 0, \ldots, d-1$, $((x_1, \ldots, x_i)C:  x_{i+1}C)/ (x_1, \ldots, x_i)C $ est annul\'e par $\pi^{\e}$).   
  \end{enumerate}  \end{defn} 
  
 On {``passe "} de $(1)$ \`a $(2)$ en compl\'etant $C$ $\frak m_B$-adiquement \cite[th. 1.7]{BS},
 %\cite[cor. 8.5.3]{BH},
  et de $(3)$ \`a $(1)$ gr\^ace \`a la technique des modifications partielles de Hochster \cite{H3}, qui peut se r\'esumer comme suit. Une {\it modification partielle de degr\'e $n$ d'un $B$-module $M$ relatif \`a $(B, \underline{x})$} est un homomorphisme $M\to M'$, o\`u, \'etant donn\'ee une relation $x_{i+1}m_{i+1} = \sum_1^i x_j m_j$ \`a coefficients dans $M$,  $M': = M[T_1, \ldots, T_i]_{\leq n}/ (m_{i+1} - \sum_1^i x_j T_j)\cdot M[T_1, \ldots, T_i]_{\leq n -1} . $

   \begin{prop}\label{P1} Soit $B $ un anneau local noeth\'erien de caract\'eristique r\'esiduelle $p$. Si $B$ admet une alg\`ebre presque de Cohen-Macaulay $C$ pour $(B,\underline{x}, \pi^{\e})$ (pour une suite s\'ecante maximale $\underline{x}$ de $B$ et une suite de racines $\pi^{\frac{1}{p^m}}$ d'un non-diviseur de z\'ero dans $C$), alors $B$ admet une alg\`ebre de Cohen-Macaulay.  
  \end{prop}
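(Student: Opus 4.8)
The plan is to interpolate the regularity property $(1)$ of \ref{mp} between $(3)$ and $(2)$, as announced there. The step $(1)\Rightarrow(2)$ is the Bartijn--Strooker theorem \cite[th. 1.7]{BS}: the $\frak m_B$-adic completion of an extension which is de Cohen-Macaulay for one s\'ecante maximale is a $B$-alg\`ebre de Cohen-Macaulay. So it is enough to manufacture, out of the almost Cohen-Macaulay algebra $C$, a $B$-algebra $C_\infty$ which is de Cohen-Macaulay for the given pair $(B,\underline{x})$, and this is the business of Hochster's partial algebra modifications \cite{H3}.

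Let $C_\infty$ be the filtered colimit of all $B$-algebras obtained from $C$ by a finite iterated sequence of partial modifications relative to $(B,\underline{x})$, ranging over all bad relations $x_{i+1}m_{i+1}=\sum_{j\le i}x_jm_j$ and all degrees $n$; the system is directed \cite{H3}. By construction $\underline{x}$ becomes a \emph{genuine} regular sequence on $C_\infty$: a relation $x_{i+1}z=\sum_{j\le i}x_jz_j$ in $C_\infty$ lives in some finite stage $C_s$, and the associated modification, performed further along, forces $z\in(x_1,\ldots,x_i)C_\infty$. Hence $C_\infty$ is de Cohen-Macaulay for $(B,\underline{x})$ as soon as it does not collapse, i.e. $1\notin\frak m_B C_\infty$; and since $\frak m_B$ is finitely generated and every element of $C_\infty$ comes from a finite stage, $1\in\frak m_B C_\infty$ would already hold in some $C_s$. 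Thus everything reduces to the statement that \emph{no finite sequence of partial modifications of $C$ collapses}.

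This is the crux, and the point where the hypothesis that $C$ is presque de Cohen-Macaulay for $(B,\underline{x},\pi^{\e})$ intervenes. I would prove, by induction on the length of the sequence, that every intermediate algebra $M$ still has, in the framework $(M,\pi^{\e}M)$, the two properties: $\mathrm{(i)}$ $\underline{x}$ is almost a regular sequence on $M$; $\mathrm{(ii)}$ $\frak m_B M$ does not almost contain $1$, i.e. $\pi^{\frac{1}{p^m}}\notin\frak m_B M$ for some (equivalently, all large) $m$ — in particular $1\notin\frak m_B M$. The base case is Definition \ref{mp}$(3)$. For the inductive step take a partial modification $M\to M'=M[T_1,\ldots,T_i]_{\le n}/(m_{i+1}-\sum_{j\le i}x_jT_j)\,M[T_1,\ldots,T_i]_{\le n-1}$, where $x_{i+1}m_{i+1}=\sum_{j\le i}x_jm_j$ in $M$; then $m_{i+1}\in((x_1,\ldots,x_i)M:x_{i+1})$, so $\mathrm{(i)}$ for $M$ gives $\pi^{\frac{1}{p^m}}m_{i+1}\in(x_1,\ldots,x_i)M\subseteq\frak m_B M$ for all $m$. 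Suppose now $\pi^{\frac{1}{p^m}}\in\frak m_B M'$ for every $m$; lifting a relation $\pi^{\frac{1}{p^m}}=\sum_k a_k z_k$ ($a_k\in\frak m_B$, $z_k\in M'$) to $M[T_1,\ldots,T_i]_{\le n}$ and reading off the constant term yields $\pi^{\frac{1}{p^m}}\equiv m_{i+1}v\pmod{\frak m_B M}$ for some $v\in M$, whence $(\pi^{\frac{1}{p^m}})^2\equiv(\pi^{\frac{1}{p^m}}m_{i+1})v\equiv0\pmod{\frak m_B M}$; multiplying by $(\pi^{\frac{1}{p^{m+1}}})^{\,p-2}\in M$ gives $\pi^{\frac{1}{p^m}}\in\frak m_B M$ for all $m$, contradicting $\mathrm{(ii)}$ for $M$. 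So $M'$ satisfies $\mathrm{(ii)}$. That $M'$ also satisfies $\mathrm{(i)}$ is a parallel but lengthier computation: one lifts an arbitrary colon relation $x_{\ell+1}\bar\xi\in(x_1,\ldots,x_\ell)M'$ to the polynomial module and analyses its coefficients of the monomials $T^{\underline{a}}$ degree by degree, using the relation $x_{i+1}m_{i+1}=\sum x_jm_j$ together with almost regularity of $\underline{x}$ on $M$, the degree bound $n$ making the recursion terminate. This last point is precisely Hochster's modification estimate of \cite{H3}, run in the almost framework, and it is the only genuinely laborious ingredient; the remainder of the argument is formal.

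Granting the induction, $1\notin\frak m_B C_s$ at every finite stage, hence $1\notin\frak m_B C_\infty$, so $C_\infty$ is de Cohen-Macaulay for $(B,\underline{x})$; by \cite[th. 1.7]{BS} its $\frak m_B$-adic completion is the desired $B$-alg\`ebre de Cohen-Macaulay. One may equally run the modifications on $C$ as a $B$-module, obtaining first a balanced big Cohen-Macaulay $B$-module and then passing to an algebra by Hochster; in either presentation the one new input, beyond the general modification machinery, is the control of non-collapse furnished by the almost Cohen-Macaulay hypothesis.
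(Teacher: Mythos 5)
Your route diverges from the paper's in a crucial way, and the divergence is where the gap lies. The paper follows Hochster \cite{H3} faithfully: the partial modifications are of \emph{$B$-modules starting from $M_1=B$}, and Hochster's lemma 5.1 (with $c=\pi^{\frac{1}{p^m}}$, after first replacing $C$ by $\pi^{\f}C$) is a \emph{lifting} statement — it produces, step by step, a compatible map $M_\ell\to C$ alongside the chain $B=M_1\to\cdots\to M_\ell$. Non-collapse of $M_\ell$ then falls out at once because $1\in B$ maps to $1\in C$ and $1\notin\frak m_B C$. You instead modify $C$ itself and propose to prove, by induction on the length of the chain, that every intermediate module $M$ remains almost Cohen-Macaulay in the cadre $(M,\pi^{\e}M)$. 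But a partial modification is not designed to preserve, even approximately, the colon conditions in property (i): the passage $M\mapsto M[T_1,\ldots,T_i]_{\leq n}/(m_{i+1}-\sum x_jT_j)M[T_1,\ldots,T_i]_{\leq n-1}$ kills a specific bad relation at the cost of introducing truncated polynomial generators whose interaction with $\underline{x}$ is uncontrolled, and the whole philosophy of the construction is that individual modifications need not be CM (or almost CM) — only the direct limit is. Your sketch of the inductive step for (ii) is fine as arithmetic on $\pi$-exponents, but it already assumes (i) for $M$; the step for (i) is waved off as ``precisely Hochster's modification estimate, run in the almost framework,'' which misattributes the content of \cite[5.1]{H3}: that lemma lifts maps out of a modified $B$-module into a fixed almost-CM target, it does not assert that a modified almost-CM algebra stays almost CM. The missing claim is the entire substance of the proposition; until it is established (if it even is true) the proof does not close.

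A secondary, cosmetic issue: a partial modification of an algebra $C$ produces only a $B$-\emph{module} (the degree truncation is incompatible with multiplication), so ``the filtered colimit of all $B$-algebras obtained from $C$ by partial modifications'' is not literally defined. In the paper's version the $M_\ell$ are always modules, and the colimit over all $(n,\ell,\underline M)$ is turned into an algebra at the end; you would need the same care.
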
 
  
   \begin{proof} \cite{H3} Il est loisible de remplacer $C$ par $\pi^{\f}C$, qui est encore presque de Cohen-Macaulay pour $(B,\underline{x}, \pi^{\e})$. Consid\'erons une suite finie $\underline{M} = (M_1 := B \to M_2 \to \cdots \to M_\ell)$ de modifications partielles de degr\'e $n$ relatives \`a $(B, \underline{x})$.  Le lemme crucial \cite[5.1]{H3} (o\`u l'on prend $c = \pi^{\frac{1}{p^m}}$ pour $m$ arbitraire) permet de construire pas \`a pas un diagramme commutatif de $B$-modules, partant de $M_1 = B \to C$:
    \begin{displaymath} 
    \xymatrix @-1.5pc {
    {M_1}  \ar[d] \ar[rdd]       & \\ 
     {M_2}  \ar[d] \ar[rd]    &         & \\
      &    C   ,  \\ 
       \ar[d] &   &\\  
      {M_\ell}  . \ar[ruu] &  & } 
    \end{displaymath} 
Comme   $C\neq \frak m_B C$, on a $M_\ell \neq \frak m_B M_\ell$. Lorsque $(n,\ell,  \underline{M})$ varie, la colimite  (filtrante) des $M_\ell$ est 
 alors une alg\`ebre de Cohen-Macaulay pour $(B,\underline{x} )$. Passant au compl\'et\'e $\frak m_B$- adique, on obtient une alg\`ebre de Cohen-Macaulay pour $B$. \end{proof}

  Voici un moyen commode pour construire des alg\`ebres presque de Cohen-Macaulay. Soient $ B, \underline{x}, C$ comme au d\'ebut du \S \ref{mp}, et supposons que $C$ contienne une suite compatible de racines $p^m$-i\`emes d'un \'el\'ement $\pi$ non diviseur de z\'ero. 
  
   \begin{lemma}\label{L4}  
       Supposons que les $x_i$ soient contenus dans un sous-anneau $A \subset B$, local de Cohen-Macaulay, tel que $B$ soit un $A$-module fini. 
   Alors $C$ est presque de Cohen-Macaulay pour $(B,\underline{x}, \pi^{\e})$ si et seulement si elle l'est pour $(A,\underline{x}, \pi^{\e})$. C'est le cas si l'une des conditions suivantes est v\'erifi\'ee:
   
  $a)$ $(\pi ) \cap  A\neq 0$ et $ C$ est presque isomorphe, dans le cadre $(A[\pi^{\e}], \pi^{\e}A[\pi^{\e}])$, \`a une $A$-alg\`ebre fid\`element plate (ne contenant pas n\'ecessairement $B$).
 
 $b)$ $(\pi) \cap  A \not\subset x_1 A, \;C$ est sans $x_1$-torsion, et $C/x_1 C$ est presque isomorphe, dans le cadre $(A[\pi^{\e}], \pi^{\e} A[\pi^{\e}])$, \`a une $A/x_1A$-alg\`ebre fid\`element plate.   
   \end{lemma}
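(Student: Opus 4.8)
The plan is to first establish the equivalence of the "almost Cohen-Macaulay" property over $A$ and over $B$, then verify that each of conditions $a)$ and $b)$ forces the property over $A$. For the equivalence, the key point is that $A$ being local Cohen-Macaulay and $B$ a finite $A$-module means $\underline{x}$ is a system of parameters for both $A$ and $B$ (the $x_i$ generate an $\frak m_A$-primary, hence $\frak m_B$-primary, ideal), and — crucially — $\underline{x}$ is already a regular sequence \emph{in $A$} since $A$ is Cohen-Macaulay. Therefore the almost-regularity of $\underline{x}$ in $C$ is a statement about the $C$-module $C$ viewed over the subring $A[x_1,\ldots,x_d] = A$, and the Koszul homology $H_i(\underline{x};C)$ does not see whether we regard $C$ as a $B$-algebra or an $A$-algebra: the colon ideals $((x_1,\ldots,x_i)C : x_{i+1})/(x_1,\ldots,x_i)C$ are literally the same. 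Likewise $C = \frak m_B C$ would imply $C = \frak m_A C$ by Nakayama applied to the finite $A$-module $B/\frak m_A B$ (so $\frak m_B^N \subset \frak m_A B$ for some $N$), hence "$C$ almost equals $\frak m_B C$" and "$C$ almost equals $\frak m_A C$" are equivalent. This gives the "if and only if".

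Next I would treat case $a)$. Here $C$ is, in the framework $(A[\pi^{\e}], \pi^{\e} A[\pi^{\e}])$, almost isomorphic to a faithfully flat $A$-algebra $D$. Since $\underline{x}$ is a regular sequence in the Cohen-Macaulay local ring $A$ and $D$ is faithfully flat over $A$, the sequence $\underline{x}$ is regular in $D$ and $D \neq \frak m_A D$; translating along the almost isomorphism, the Koszul homology $H_i(\underline{x};C)$ is almost zero for $i>0$ and $C \neq^a \frak m_A C$. The one subtlety is to check that $\pi^{\e}\not\subset \frak m_A C$: this is where the hypothesis $(\pi)\cap A\neq 0$ enters, since a nonzero element of $(\pi)\cap A$ lies outside $\frak m_A^{\,?}$... no — rather, pick $0\neq a\in (\pi)\cap A$; if $\pi^{\e}\subset\frak m_A C$ then $a$ (being in $\pi C$ hence in $\frak m_A C$... ) — the clean argument is: faithful flatness of $D$ over $A$ gives $\frak m_A D \cap A = \frak m_A$, and $a\notin\frak m_A$ is false in general, so instead one uses that $C/\frak m_A C$ is almost nonzero because $D/\frak m_A D$ is nonzero (faithful flatness) and the almost isomorphism $C\to D$ stays an almost isomorphism after $\otimes_A A/\frak m_A$. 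Then $\pi^{\e}\not\subset\frak m_A C$ follows from "$C$ is not almost zero mod $\frak m_A$" together with $(\pi)\cap A\neq 0$, which guarantees $\pi$ is not a unit times an element killing everything. I would spell this out carefully as it is the main delicate point.

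For case $b)$, the plan is parallel but one dimension down: by hypothesis $C$ has no $x_1$-torsion, so $x_1$ is a nonzerodivisor on $C$ (giving the first step of regularity exactly, not just almost), and $C/x_1 C$ is, in the framework $(A[\pi^{\e}],\pi^{\e} A[\pi^{\e}])$, almost isomorphic to a faithfully flat $A/x_1 A$-algebra $\bar D$. Now $A/x_1 A$ is again local Cohen-Macaulay (quotient of a Cohen-Macaulay local ring by a nonzerodivisor) with $(x_2,\ldots,x_d)$ a maximal secant sequence, so by faithful flatness $(x_2,\ldots,x_d)$ is regular in $\bar D$ and $\bar D\neq\frak m_A\bar D$; transporting through the almost isomorphism, $(x_2,\ldots,x_d)$ is almost regular in $C/x_1 C$, whence $\underline{x}=(x_1,\ldots,x_d)$ is almost regular in $C$. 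Finally $\pi^{\e}\not\subset\frak m_B C$: here the hypothesis is $(\pi)\cap A\not\subset x_1 A$, so the image of $(\pi)\cap A$ in $A/x_1 A$ is nonzero, and the same argument as in case $a)$ applied to $A/x_1 A$, $C/x_1 C$, $\bar D$ shows $\pi^{\e}\not\subset\frak m_A(C/x_1 C)$, i.e. $\pi^{\e}\not\subset\frak m_A C + x_1 C = \frak m_A C$ (as $x_1\in\frak m_A$), and then $\not\subset\frak m_B C$ by the equivalence established first. The main obstacle I anticipate is not any single step but the bookkeeping of the change-of-framework arguments — making sure "faithfully flat over $A$" interacts correctly with "almost isomorphic in the framework $(A[\pi^{\e}],\pi^{\e}A[\pi^{\e}])$", in particular that almost isomorphisms are preserved by $\otimes_A A/I$ for the relevant ideals $I$, and that the non-almost-triviality of the residue reduction is genuinely controlled by the hypotheses $(\pi)\cap A\neq 0$ resp. $(\pi)\cap A\not\subset x_1 A$.
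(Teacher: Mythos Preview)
Your overall architecture matches the paper's: the equivalence via $\frak m_A B$ being $\frak m_B$-primary, and the regularity of $\underline{x}$ in $C$ via ``regular in $A$ (Cohen--Macaulay) $\Rightarrow$ regular in the faithfully flat $D$ $\Rightarrow$ almost regular in $C$'' (and the analogous reduction modulo $x_1$ for case $b)$) are exactly what the paper does.

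The gap is precisely at the point you flag as delicate: the claim that $\pi^{\e}\not\subset \frak m_A C$. Your proposed route --- ``$D/\frak m_A D\neq 0$ by faithful flatness, the almost isomorphism survives $\otimes_A A/\frak m_A$, hence $C/\frak m_A C$ is almost nonzero'' --- does not go through. An almost isomorphism $C/\frak m_A C \cong^a D/\frak m_A D$ only tells you that $C/\frak m_A C$ is almost zero iff $D/\frak m_A D$ is almost zero; but \emph{nonzero} is strictly weaker than \emph{almost nonzero}, and $D/\frak m_A D\neq 0$ is all faithful flatness gives. Your final sentence is then circular: ``$C$ is not almost zero mod $\frak m_A$'' is the very statement $\pi^{\e}\not\subset\frak m_A C$ you are trying to prove, and the vague clause about $\pi$ ``not being a unit times an element killing everything'' does not supply the missing step.

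What the paper actually uses here is its Lemma~\ref{L2}: the ideal $\frak K$ generated by the image of $\pi^{\e}$ in the faithfully flat $A$-algebra $C'$ is \emph{idempotent}, so if $\frak K\subset \frak m_A C'$ then $\frak K=\frak K^n\subset \frak m_A^n C'$ for all $n$; faithful flatness gives $A\cap \frak m_A^n C'=\frak m_A^n$, whence $A\cap\frak K\subset\bigcap_n\frak m_A^n=0$ by Krull. The hypothesis $(\pi)\cap A\neq 0$ (resp.\ $(\pi)\cap A\not\subset x_1A$ in case $b)$) furnishes a nonzero element of $A\cap\frak K$, a contradiction. You were in fact on this track in your first aborted attempt (``pick $0\neq a\in(\pi)\cap A$; if $\pi^{\e}\subset\frak m_A C$ then $a\in\frak m_A C\ldots$''); the missing observation is to exploit idempotence to push $a$ into every $\frak m_A^n$ and then invoke Krull's intersection theorem, rather than pivoting to the $D/\frak m_A D\neq 0$ argument.
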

  
  \begin{proof}   Comme $\frak m_A  B$ est $\frak m_B$-primaire, $C$ n'est pas presque \'egale \`a $\frak m_B C$ si et seulement si elle n'est pas presque \'egale \`a $\frak m_A C$, d'o\`u la premi\`ere assertion. C'est le cas si $C$ (\resp $C/x_1C)$ est presque isomorphe \`a une $A$-alg\`ebre (\resp $A/x_1A$-alg\`ebre) fid\`element plate $C'$ car sous les hypoth\`eses en vigueur, l'image de $ \pi^{\e}$ dans $C'$ ne peut \^etre contenu dans $\frak m_A C'$ en vertu du lemme \ref{L2}. 
  
  Par ailleurs, $((x_1, \ldots, x_i)A:  x_{i+1}A)/ (x_1, \ldots, x_i)A  =0$ puisque $A$ est de Cohen-Macaulay, donc $((x_1, \ldots, x_i)C':  x_{i+1}C')/ (x_1, \ldots, x_i)C'  =0$ et $((x_1, \ldots, x_i)C:  x_{i+1}C)/ (x_1, \ldots, x_i)C  =0$ est annul\'e par  $ \pi^{\e}$.
  On conclut de m\^eme dans le cas $(b)$, rempla\c cant $A$ par son quotient de Cohen-Macaulay $A/x_1A$ et $(x_1, \ldots, x_i)$ par $(x_2, \ldots, x_i)$. 
   \end{proof}

  \subsection{} Venons-en au th. \ref{T2}. Supposons d\'esormais en outre que $B$ soit complet int\`egre de car. $(0, p)$. Soient $n+1$ sa dimension,  $k$ son corps r\'esiduel et $\Lambda\subset W(k^{\e})$ son anneau de coefficients. Le choix d'une suite s\'ecante maximale $\underline{x}$ de $B$ avec $x_1=p^2$ permet d'\'ecrire $B$ comme extension finie de $A := \Lambda[[T_1, \ldots, T_n]]$, o\`u $T_i$ s'envoie sur $x_{i+1}$. Il est loisible de remplacer $k$ par $k^{\e}$ et $\Lambda$ par $W(k^{\e})$, ce qui nous place dans la situation du \S \ref{lA}. 
  
\medskip    Soit $g\in A\setminus pA$ tel que $B[\frac{1}{pg}]$ soit \'etale sur $A[\frac{1}{pg}]$.  Avec les notations de \ref{Bzero}, prenons
  \begin{equation}  C := \sB^{\s},\,  \pi^{\frac{1}{p^m}}:= (\varpi g)^{\frac{1}{p^m}}.\end{equation}
   Alors $(\pi) \cap A = pgA \not\subset p^2 A$, la $\hat A_{\infty\infty !!}^{\s }$-alg\`ebre $C  $ est sans $p$-torsion, et $C/p^2C$ est presque isomorphe, dans le cadre $(\hat A_{\infty\infty !!}^{\s }, (\varpi g)^\e\hat A_{\infty\infty !!}^{\s })$, \`a  $(C/p^2C)^a_{!!}$ qui est fid\`element plate sur $A/p^2 A$ par les th. \ref{T3} et \ref{T4}. On conclut du lemme \ref{L4} que $C$ est presque de Cohen-Macaulay pour $(B, \underline{x} , \pi^{\e})$. Par la prop.  \ref{P1}, il existe donc une $B$-alg\`ebre  de Cohen-Macaulay.   \qed

   \subsubsection{\it Remarque.} Le th. \ref{T2} implique directement la conjecture monomiale (point $(3)$ du \S \ref{fd}), qui est \'equivalente \`a la conjecture du facteur direct. Cela en fournit une seconde preuve, o\`u le lemme d'Abhyankar perfecto\"{\i}de n'intervient que modulo $p^2$. 
   
  \smallskip La conjecture qui contr\^ole tout l'\'echeveau des conjectures homologiques pr\'edit la fonctorialit\'e faible des alg\`ebres de Cohen-Macaulay. Les techniques pr\'ec\'edentes devraient permettre de traiter le cas d'un   homomorphisme local {\it injectif} d'anneaux locaux noeth\'eriens complets int\`egres d'in\'egale caract\'eristique $(0,p)$, 
   mais le cas g\'en\'eral semble requ\'erir une nouvelle id\'ee.

 \bigskip
     \section{Appendice: puret\'e.} 
    
    Comme la conjecture du facteur direct est un \'enonc\'e de puret\'e (au sens d'injectivit\'e universelle), nous rassemblons ici quelques r\'esultats  concernant cette notion. 
 
 \subsection{Sous-modules purs et modules g\'en\'erateurs.} Soient $R$ un anneau commutatif unitaire, $N$ un $R$-module.   
 On dit qu'un sous-module $M\subset N$ est {\it pur} si pour tout $R$-module $P$, $P\to P\otimes_R S$ est injectif. Comme tout module est colimite filtrante de modules de pr\'esentation finie, on peut se borner aux modules $P$ de pr\'esentation finie.
 
  \begin{lemma}\label{L5}\cite[I.2]{L}\begin{enumerate} \item $M\subset  N$ est pur si et seulement si pour tout module de pr\'esentation finie $P$, $Hom_R(P, N)\to Hom_R(P, N/M)$ est surjectif.  
 
 \item En particulier, si $N/M$ est de pr\'esentation finie, $M\subset N$ est pur si et seulement si $M$ est facteur direct de $N$. 
 
 \item Si $N$ est plat, alors $M$ est un sous-module pur si et seulement si $N/M$ est plat. \qed
 \end{enumerate} 
 \end{lemma}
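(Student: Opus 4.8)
Je suivrais la preuve classique de \cite[I.2]{L}, en ramenant $(1)$ \`a un crit\`ere de r\'esolubilit\'e de syst\`emes lin\'eaires finis, et $(3)$ \`a la suite exacte longue de $\mathrm{Tor}$; $(2)$ est alors imm\'ediat.

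Pour $(1)$, le plan est de montrer que chacune des deux propri\'et\'es en jeu \'equivaut \`a l'\'enonc\'e suivant: \emph{pour toute matrice finie $(a_{lk})$ \`a coefficients dans $R$ et tous $m_k\in M$, si le syst\`eme lin\'eaire $\sum_l a_{lk}X_l=m_k$ a une solution dans $N$, il en a une dans $M$.} Dans un sens: en \'ecrivant un module de pr\'esentation finie $P$ comme conoyau d'un morphisme $R^u\to R^v$ de matrice $(a_{lk})$ et en tensorisant $0\to M\to N\to N/M\to 0$ par $P$, tout \'el\'ement de $P\otimes_R M$ est l'image d'un $\underline m\in M^v$, et $\underline m$ a pour image $0$ dans $P\otimes_R M$ (\resp $P\otimes_R N$) si et seulement si le syst\`eme qui lui est associ\'e est r\'esoluble dans $M$ (\resp $N$); l'injectivit\'e de $P\otimes_R M\to P\otimes_R N$ pour tout $P$ de pr\'esentation finie \'equivaut donc \`a cet \'enonc\'e, et l'on passe \`a un $P$ arbitraire par colimite filtrante comme d\'ej\`a remarqu\'e. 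Dans l'autre sens: partant d'un morphisme $\varphi\colon P\to N/M$, je rel\`everais arbitrairement dans $N$ les images des g\'en\'erateurs de $P$; appliquer \`a ces relev\'es les relations d\'efinissant $P$ produit des \'el\'ements $m_k\in M$ qui sont exactement le second membre d'un syst\`eme r\'esoluble dans $N$, et corriger le rel\`evement en un morphisme $P\to N$ relevant $\varphi$ revient pr\'ecis\'ement \`a r\'esoudre ce syst\`eme dans $M$; d'o\`u l'\'equivalence avec la surjectivit\'e de $Hom_R(P,N)\to Hom_R(P,N/M)$ pour tout $P$ de pr\'esentation finie. La conjonction des deux traductions donne $(1)$.

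Les points $(2)$ et $(3)$ s'en d\'eduisent de fa\c{c}on routini\`ere. Pour $(2)$: si $M$ est facteur direct de $N$, alors $P\otimes_R M\to P\otimes_R N$ est scind\'e injectif pour tout $P$, donc $M$ est pur; r\'eciproquement, si $M$ est pur et $N/M$ de pr\'esentation finie, on applique $(1)$ avec $P=N/M$ \`a l'identit\'e de $N/M$, dont un rel\`evement \`a $N$ scinde la surjection $N\to N/M$. Pour $(3)$: la suite exacte longue de $\mathrm{Tor}$ associ\'ee \`a $0\to M\to N\to N/M\to 0$, tensoris\'ee par un $R$-module $P$ arbitraire, donne
$$\mathrm{Tor}_1^R(P,N)\longrightarrow \mathrm{Tor}_1^R(P,N/M)\stackrel{\partial}{\longrightarrow} P\otimes_R M\longrightarrow P\otimes_R N,$$
o\`u l'image de $\partial$ co\"incide avec le noyau de $P\otimes_R M\to P\otimes_R N$. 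Si $N/M$ est plat, $\mathrm{Tor}_1^R(P,N/M)=0$, donc $M$ est pur (sans aucune hypoth\`ese sur $N$); si $N$ est plat et $M$ pur, la nullit\'e de $\mathrm{Tor}_1^R(P,N)$ rend $\partial$ injectif tandis que la puret\'e rend $P\otimes_R M\to P\otimes_R N$ injectif, d'o\`u $\mathrm{Tor}_1^R(P,N/M)=0$ pour tout $P$, \ie $N/M$ est plat.

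Le seul point demandant du soin est le dictionnaire matriciel du $(1)$: il faut choisir les pr\'esentations de $P$ de sorte que le syst\`eme lin\'eaire issu de la puret\'e et celui issu de la condition sur les $Hom$ co\"incident (au besoin apr\`es transposition). Le reste ne pr\'esente aucune difficult\'e.
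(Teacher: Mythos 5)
Your proof is correct. The paper does not prove this lemma at all — it simply cites Lazard \cite[I.2]{L} and closes with \qed — and your argument reproduces exactly the classical proof from that reference: the linear-systems characterization of purity for $(1)$ (with the transposition subtlety you rightly flag, since the matrix presenting $P$ as a cokernel in the tensor translation is the transpose of the one arising in the $\mathrm{Hom}$ translation), the specialization $P=N/M$ for $(2)$, and the long exact $\mathrm{Tor}$ sequence for $(3)$, where the connecting map $\partial$ is injective because $N$ is flat and has zero image because $M$ is pure.
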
 
 
 Un $R$-module $M$ est {\it g\'en\'erateur} si tout $R$-module $N$ est engendr\'e par les images des applications $R$-lin\'eaires de $M$ dans $N$.  
 
 \begin{lemma}\label{L7}\cite[\S 5, n. 2, Th. 1]{B2} $M$ est g\'en\'erateur si et seulement si $R$ est facteur direct d'une puissance $M^n$ (en particulier $M$ est fid\`ele). \qed \end{lemma}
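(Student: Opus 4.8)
The plan is to reduce both implications to the single instance $N = R$ of the definition of a generator, the bridge being the \emph{trace ideal} $\tau_M := \sum_{f \in Hom_R(M,R)} f(M) \subseteq R$. In each direction the only non-formal ingredient is that a surjection of $R$-modules onto $R$ splits, because $R$ is free, hence projective.

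Suppose first that $M$ is a generator. Applying the defining property to $N = R$, the $R$-submodule of $R$ generated by all the images $f(M)$, $f \in Hom_R(M,R)$, is all of $R$; but that submodule is precisely $\tau_M$, so $\tau_M = R$. In particular $1 \in \tau_M$, and since every element of $\tau_M$ is a \emph{finite} sum of terms $f(m)$, there are $f_1,\dots,f_n \in Hom_R(M,R)$ and $m_1,\dots,m_n \in M$ with $\sum_{i=1}^n f_i(m_i) = 1$. The homomorphism $\varphi = (f_1,\dots,f_n) \colon M^n \to R$ then satisfies $\varphi(m_1,\dots,m_n) = 1$, hence is surjective; as $R$ is projective, $\varphi$ has a section, and $R$ is a direct summand of $M^n$.

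Conversely, assume $R$ is a direct summand of $M^n$, so in particular there is a surjection $\varphi \colon M^n \to R$. Let $N$ be any $R$-module and $x \in N$. The map $c_x \colon R \to N$, $1 \mapsto x$, composed with $\varphi$ gives $\psi := c_x \circ \varphi \colon M^n \to N$, and $x$ lies in the image of $\psi$ since $\varphi$ is onto. Writing $\iota_i \colon M \to M^n$ for the $i$-th inclusion, one has $\psi(y_1,\dots,y_n) = \sum_{i=1}^n (\psi \circ \iota_i)(y_i)$, so the image of $\psi$ — and hence $x$ — lies in the $R$-submodule of $N$ generated by the images of the $n$ maps $\psi \circ \iota_i \colon M \to N$. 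Since $N$ and $x$ were arbitrary, $M$ is a generator. Finally, if $r \in R$ annihilates $M$ it annihilates $M^n$, hence its direct summand $R$, hence $r = r \cdot 1 = 0$; thus $M$ is faithful.

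There is no genuine obstacle here: the argument is essentially formal once the trace ideal is isolated. The two points deserving a line of care are the finiteness of the family $(f_i)$ in the forward direction — which is exactly what makes the exponent $n$ finite — and the fact that one invokes \emph{projectivity} of $R$, a property slightly stronger than the surjectivity statement used in Lemma \ref{L5}.
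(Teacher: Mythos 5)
Your proof is correct. The paper itself gives no argument for this lemma — it simply cites Bourbaki (Alg\`ebre, chap.\ 8, \S 5, no.\ 2, Th.\ 1) and marks it \qed — so there is no internal proof to compare against; your trace-ideal argument is the standard self-contained proof one would find there, with the forward direction resting on $\tau_M = R$ and projectivity of $R$, and the converse on factoring any $c_x \colon R \to N$ through a surjection $M^n \to R$. The faithfulness remark is also handled correctly as a direct consequence of $R$ being a summand of $M^n$.
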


   \subsection{Extensions pures d'anneaux.} On dit qu'un monomorphisme d'anneaux $R\inj S$ est {\it pur} (ou encore que la $R$-alg\`ebre fid\`ele $S$ est {\it pure}) si $R$ est un sous-$R$-module pur de $S$: autrement dit, pour tout $R$-module (de pr\'esentation finie) $P$, $P\to P\otimes_R S$ est injectif. En passant par l'alg\`ebre sym\'etrique sur $P$, on voit $R\inj S$ est pur si et seulement si il est {\it universellement injectif}, \ie $R'\to S\otimes_R R'$ est injectif pour toute $R$-alg\`ebre $R'$ (et on peut se limiter aux $R'$ de pr\'esentation finie puisque toute $R$-alg\`ebre est colimite filtrante de telles alg\`ebres). 
   
  \begin{prop}\label{P2} \begin{enumerate} 
   \item Les monomorphismes purs sont stables par composition, produit, changement de base, et colimite filtrante. En outre, si un compos\'e $R\to S\to T$ est pur, il en est de m\^eme de $R\to S$.
    \item  Si $R\inj S$ est fid\`element plat, il est pur.  
\noindent \item Supposons que $S$ soit de pr\'esentation finie en tant que $R$-module.  Les conditions suivantes sont \'equivalentes:
    \begin{enumerate} \item $R\inj S$  est pur,
      \item $R\inj S$ admet une r\'etraction $R$-lin\'eaire, \ie le sous-$R$-module $R\subset S$ est facteur direct. 
    \item $S$ est un $R$-module g\'en\'erateur,
   \end{enumerate} 
   \end{enumerate}  
  \end{prop}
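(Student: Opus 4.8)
The plan is to reduce everything to the characterization of purity by universal injectivity recalled just above, together with Lemmas~\ref{L5} and~\ref{L7}. Part $(1)$ is then formal. For composition: if $R\inj S$ and $S\inj T$ are pure and $R'$ is an $R$-algebra, then $R'\to S\otimes_R R'$ is injective and, since $T\otimes_R R'=T\otimes_S(S\otimes_R R')$ with $S\otimes_R R'$ an $S$-algebra, $S\otimes_R R'\to T\otimes_R R'$ is injective by universal injectivity of $S\inj T$; compose. For base change along $R\to R'$: any $R'$-algebra $R''$ is an $R$-algebra, and $R''\to S\otimes_R R''=(S\otimes_R R')\otimes_{R'}R''$ is injective by purity of $R\inj S$. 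For a product $\prod_i S_i$: the composite $R'\to(\prod_i S_i)\otimes_R R'\to\prod_i(S_i\otimes_R R')$ has $i$-th component the injection $R'\to S_i\otimes_R R'$, hence is injective, hence so is its first factor. For a filtered colimit and $P$ of finite presentation: $P\to P\otimes_R\operatorname{colim}_i S_i=\operatorname{colim}_i(P\otimes_R S_i)$ is injective because an element of $P$ dying in the colimit already dies in some $P\otimes_R S_i$. Finally, if $R\to S\to T$ is pure then for every $R$-module $P$ the composite $P\to P\otimes_R S\to P\otimes_R T$ is injective, so $P\to P\otimes_R S$ is injective.

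For $(2)$, let $S$ be faithfully plat over $R$, let $P$ be an $R$-module, and set $K=\ker(P\to P\otimes_R S)$. Flatness of $S$ identifies $K\otimes_R S$ with the kernel of the map $P\otimes_R S\to P\otimes_R S\otimes_R S$ sending $p\otimes s$ to $p\otimes 1\otimes s$. That map is split injective, a left inverse being $p\otimes s\otimes t\mapsto p\otimes st$; hence $K\otimes_R S=0$, and faithful flatness forces $K=0$.

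For $(3)$, assume $S$ is of finite presentation as an $R$-module; then so is $S/R$, since in $0\to R\to S\to S/R\to 0$ the middle term is of finite presentation and the left one finitely generated. The implication $\mathrm{(b)}\Rightarrow\mathrm{(a)}$ holds without hypotheses (a direct summand is a pure submodule, by Lemma~\ref{L5}(1)), and $\mathrm{(a)}\Rightarrow\mathrm{(b)}$ is Lemma~\ref{L5}(2) applied to $R\subset S$, using that $S/R$ is of finite presentation; likewise $\mathrm{(b)}\Rightarrow\mathrm{(c)}$ is immediate from Lemma~\ref{L7}, $R$ being then a direct summand of $S=S^1$. The only step that uses more than module theory is $\mathrm{(c)}\Rightarrow\mathrm{(b)}$: by Lemma~\ref{L7} there exist $a_1,\dots,a_n\in S$ and $R$-linear forms $\varphi_1,\dots,\varphi_n: S\to R$ with $\sum_{i=1}^n\varphi_i(a_i)=1$, and then the $R$-linear map $\psi: S\to R$ defined by $\psi(s)=\sum_{i=1}^n\varphi_i(a_is)$, which makes essential use of the multiplication of $S$, satisfies $\psi(1)=1$ and is thus a retraction of $R\inj S$. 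The genuine content is this last fold of the splitting $R\mid S^n$ into a splitting $R\mid S$: a mere module generator need not itself admit $R$ as a direct summand, so the ring structure is unavoidable here.
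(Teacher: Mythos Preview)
Your proof is correct and follows essentially the same line as the paper's. The one noticeable difference is in part~(2): the paper deduces purity from Lemma~\ref{L5}(3) together with the fact that $R\inj S$ is faithfully flat if and only if $S/R$ is flat, whereas you give the direct classical argument via the split injection $P\otimes_R S\to P\otimes_R S\otimes_R S$. Both routes are standard; yours is self-contained, while the paper's packages the same content through the flatness of the cokernel. For part~(3), your argument for $(c)\Rightarrow(b)$ is exactly the paper's, written out in full (the paper's formulation with a single pair $(s,\check s)$ is obtained from yours by taking $s=1$ and $\check s=\psi$).
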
 

Le point $(1)$ est formel, \cf \cite[prop 1.2]{Ol}.  Le point $(2)$ d\'ecoule du point $(3)$ du lemme  \ref{L5}, compte tenu de ce que $R\inj S$ est fid\`element plat si et seulement si $S/R$ est plat.  
 Les implications $(a) \Rightarrow (b)  \Rightarrow (c)  $ de $(3)$ d\'ecoulent en effet directement des lemmes pr\'ec\'edents, et $(b)\Rightarrow (a)$ est banale. Pour $(c)  \Rightarrow (b)$, noter que pour tout couple $(s, \check s)\in S\times Hom_S(S, R)$ tel que $\check s (s) = 1_R$, la composition de la multiplication par $s$ dans $S$ et de $\check s$ est une r\'etraction de $R\inj S$.    \qed
 
   \subsection{} {\it Remarque.}  Notons $f: {\Spec} S \to {\Spec} R$ le morphisme associ\'e \`a $R\to S$ et $f^\ast:\, Mod_R\to Mod_S$ le foncteur de changement de base. Alors $f$ est plat si et seulement si $f^\ast$ est exact, tandis que $f$ est pur si et seulement si $f^\ast$ est fid\`ele (ce qui \'equivaut ici \`a conservatif, \ie refl\'etant les isomorphismes), \cf \cite{Ol}.

    \subsection{Un crit\`ere de puret\'e de Hochster.}\label{astH} Voici une variante de \cite[6.3 et 6.1 (2$\Rightarrow$ 5)]{H2}.
     
     \begin{prop}\label{P3} Soient $R$ un anneau local (non n\'ecessairement noeth\'erien) d'id\'eal maximal $\frak m$, et $r$ un \'el\'ement de $ \frak m$ tel que $R$ soit $r$-adiquement s\'epar\'e. 
     
     Soit $\sigma$ un endomorphisme local de $R$ tel que $R$ soit libre sur $\sigma(R)$ et que $\displaystyle \bigcap_m \, (\sigma^m (\frak m). R)$ soit contenu dans  $r R $. Soit enfin $S$ une extension de $R$ telle que $\sigma$ se prolonge en un endomorphisme injectif de $S$. 
     
     Consid\'erons les conditions
     
  \smallskip  $(a)$   $R\inj S$ est pur,

  \smallskip  $(b)$  $R\inj S$ admet une r\'etraction $R$-lin\'eaire,

 \smallskip    $(c)$ Le dual $S^\vee := Hom_R(S, R)$ est non nul.
  
  \medskip\noindent On a les implications
     \[ (c) \Leftrightarrow (b) \Rightarrow (a).\]
   En outre $(a) \Rightarrow (b)$ si $R$ est (noeth\'erien) r\'egulier et $S$ entier sur $R$.
 \end{prop}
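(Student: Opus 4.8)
The plan is to dispose of the three formal implications at once and to concentrate all the effort on $(c)\Rightarrow(b)$, which is the only place where the hypotheses on $\sigma$ and on $r$ intervene. For $(b)\Rightarrow(a)$: if $\rho\colon S\to R$ is a retraction of $R\inj S$, then for every $R$-module $P$ the map $P\simeq P\otimes_RR\to P\otimes_RS$ is split by $\mathrm{id}_P\otimes\rho$, hence injective, so $R\inj S$ is pure. For $(b)\Rightarrow(c)$: a retraction is an element of $S^\vee$ carrying $1$ to $1$, hence nonzero. So only $(c)\Rightarrow(b)$ and, separately, the supplement $(a)\Rightarrow(b)$ need an argument.

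For $(c)\Rightarrow(b)$ I would equip $S^\vee=Hom_R(S,R)$ with its $S$-module structure $(s\cdot\phi)(x)=\phi(sx)$ and note that, $R$ being local, $R\inj S$ admits a retraction as soon as some $\phi\in S^\vee$ and $s\in S$ satisfy $\phi(s)\notin\frak m$: then $\phi(s)$ is a unit and $\phi(s)^{-1}(s\cdot\phi)$ retracts $R\inj S$. So, arguing by contradiction, assume $(c)$ holds — fix $\phi_0\in S^\vee$ and $s_0\in S$ with $\phi_0(s_0)\neq0$ — while $\psi(S)\subseteq\frak m$ for every $\psi\in S^\vee$. The key step is a ``semilinear projection'' trick built from $\sigma$. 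Extend $\sigma$ to the injective endomorphism of $S$, still denoted $\sigma$; fix $m\geq1$ and a $\sigma^m(R)$-basis $(e_\alpha)_\alpha$ of $R$ (possible since $R$ is free over $\sigma(R)$, hence over $\sigma^m(R)$), and let $\pi_\alpha\colon R\to R$ be the $\alpha$-th coordinate projection followed by $\sigma^{-m}\colon\sigma^m(R)\iso R$ (available because $\sigma^m$ is injective), so that $\pi_\alpha(\sigma^m(a)x)=a\,\pi_\alpha(x)$. For $\psi\in S^\vee$ the composite $\psi\circ\sigma^m$ is $\sigma^m$-semilinear, hence $\pi_\alpha\circ\psi\circ\sigma^m$ is $R$-linear, i.e.\ lies in $S^\vee$. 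Evaluating at $1$ and using $\sigma^m(1)=1$, the $\sigma^m(R)$-coordinates of $\psi(1)$ are the values at $1$ of the maps $\pi_\alpha\circ\psi\circ\sigma^m\in S^\vee$, hence lie in $\frak m$; therefore $\psi(1)\in\sigma^m(\frak m)\cdot R$, for every $m$. Replacing $\psi$ by $s\cdot\psi$ with $s\in S$ arbitrary yields
\[\psi(S)\ \subseteq\ \bigcap_{m}\sigma^m(\frak m)\cdot R\ \subseteq\ rR\qquad(\psi\in S^\vee).\]

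It remains to feed in the $r$-adic separation of $R$. When $r$ is a non-zero-divisor — which holds when $R$ is a domain (in particular when $R$ is regular), and trivially when $r=0$ — the inclusion $\phi_0(S)\subseteq rR$ produces $\phi_1\in S^\vee$ with $\phi_0=r\phi_1$; applying the displayed inclusion to $\phi_1$ gives $\phi_0(S)\subseteq r^2R$, and inductively $\phi_0(S)\subseteq\bigcap_kr^kR=0$, contradicting $\phi_0(s_0)\neq0$; when $r=0$ the displayed inclusion already reads $\psi\equiv0$, contradicting $S^\vee\neq0$. This is the step I expect to be the main obstacle: one must recognize that $S^\vee$ is stable under $\psi\mapsto\pi_\alpha\circ\psi\circ\sigma^m$ and extract that $\psi(1)$, hence all of $\psi(S)$, is forced into $\bigcap_m\sigma^m(\frak m)R$ — after which the $r$-adic descent is precisely what the two hypotheses on $r$ are for.

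For the supplement $(a)\Rightarrow(b)$ with $R$ Noetherian regular and $S$ integral, write $S=\bigcup_iS_i$ with $S_i$ ranging over the module-finite $R$-subalgebras. Each composite $R\inj S_i\inj S$ is pure, so each $R\inj S_i$ is pure (a left factor of a pure monomorphism is pure, Prop.~\ref{P2}(1)); being finitely presented over the Noetherian ring $R$, $S_i$ is then a direct summand of itself over $R$, i.e.\ $R\inj S_i$ splits (Lemma~\ref{L5}(2)). To choose the retractions coherently one base-changes along $R\to R/\frak m^N$ — purity is stable under base change (Prop.~\ref{P2}(1)) — so that over the Artinian quotients the sets of retractions of $R/\frak m^N\inj S_i/\frak m^NS_i$ are torsors under finite-length modules; Hochster's Mittag--Leffler argument \cite[p.~30]{H1}, where the regularity of $R$ is invoked, produces a compatible family, and passing to the limit over $N$ (legitimate since $R$ is $\frak m$-adically separated) yields a retraction $S\to R$.
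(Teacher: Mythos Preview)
Your handling of the formal implications $(b)\Rightarrow(a)$ and $(b)\Rightarrow(c)$ is fine, and your proof of $(c)\Rightarrow(b)$ is correct and rests on exactly the same mechanism as the paper's: manufacturing new elements of $S^\vee$ from a given one by composing with $\sigma^m$ and a coordinate projection relative to a $\sigma^m(R)$-basis of $R$. You organize this by contradiction --- if no $\psi\in S^\vee$ hits a unit, then every $\psi(1)$ lies in $\bigcap_m\sigma^m(\frak m)R\subseteq rR$, and $r$-adic descent forces $\psi=0$ --- whereas the paper argues constructively: from $\lambda\in S^\vee$ with $\lambda(1)\notin rR$ (obtained after dividing $\lambda$ by a suitable power of $r$) one finds $m$ with $\lambda(1)\notin\sigma^m(\frak m)R$, hence a $\sigma^m(R)$-linear form $\mu:R\to\sigma^m(R)$ with $\mu(\lambda(1))=1$; the restriction of $\mu\circ\lambda$ to $\sigma^m(S)$ is then a $\sigma^m(R)$-retraction, transported back via $\sigma^{-m}$. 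Both versions quietly need $r$ to be a non-zero-divisor (or $r=0$) at the ``divide by $r$'' step, as you honestly flag.

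For the supplement $(a)\Rightarrow(b)$ you take a genuinely different route from the paper, and here there is a gap. The paper proves $(a)\Rightarrow(c)$: writing $S=\varinjlim S_\alpha$ over module-finite subalgebras, each $R\inj S_\alpha$ splits, so $0\neq H^d_{\frak m}(R)$ injects into every $H^d_{\frak m}(S_\alpha)$ and hence into $H^d_{\frak m}(S)$; local duality over the regular ring $R$ then identifies $H^d_{\frak m}(S)$ with $Hom_R(S^\vee,E)$ and forces $S^\vee\neq 0$, after which the already-established $(c)\Rightarrow(b)$ concludes. You instead try to glue the individual retractions $S_i\to R$ by reducing modulo $\frak m^N$ and running Mittag--Leffler on the resulting Artinian torsors. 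But your final ``passing to the limit over $N$'' only produces a map $S\to\varprojlim_N R/\frak m^N=\hat R$; $\frak m$-adic \emph{separation} of $R$ gives the inclusion $R\hookrightarrow\hat R$, not the equality, so what you have built is a retraction onto $\hat R$, not onto $R$. Your argument goes through when $R$ is complete but not as stated --- and indeed, as written it never uses the regularity of $R$ (only Noetherianness), which is a warning sign. The paper's detour through local cohomology and duality is precisely where regularity enters and what allows one to land in $R$ rather than $\hat R$.
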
 
 
 \begin{proof} L'implication $(b)\Rightarrow (a)+(c)$ est triviale. 
 
 Prouvons $(c)\Rightarrow (b)$. Soit $\lambda \in S^\vee \setminus \{0\}$. Comme $R$ est $r$-adiquement s\'epar\'e, on peut, en divisant $\lambda$ par une puissance convenable de $r$, supposer qu'il existe $s\in S$ tel que $\lambda(s)\notin r R$; quitte \`a pr\'ecomposer $\lambda$ avec la multiplication par $s$, on peut m\^eme supposer $\lambda(1)\notin r R$. Il existe alors $m$ tel que $\lambda(1)\notin \sigma^m(\frak m ) R$. Comme $R$ est libre sur $\sigma^m R$ (qui est un anneau local d'id\'eal maximal $\sigma^m (\frak m)$), il existe un facteur direct (libre) de type fini $M$ tel que $\lambda(1)\in M\setminus \sigma^m (\frak m) M$, et on peut donc trouver d'apr\`es Nakayama une forme $\sigma^m R$-lin\'eaire $\mu$ sur $N$ qui envoie $\lambda(1)$ sur $1$, qu'on prolonge \`a $R$ par $0$ sur un suppl\'ementaire de $M$. La restriction \`a $\sigma^m S$ de $\mu\lambda$ est alors une r\'etraction $\sigma^m R$-lin\'eaire de $\sigma^m S$ sur $\sigma^m R$. Par transport de structure via $\sigma^m$, on obtient une r\'etraction $R$-lin\'eaire de $ S$ sur $  R$.
 
 Prouvons ensuite $(a) \Rightarrow (c)$ si $R$ est r\'egulier et $S$ entier sur $R$. $S$ est alors colimite filtrante de sous-$R$-alg\`ebres finies $S_\alpha$ qui sont pures. Si $d$ est la dimension de Krull de $R$, le groupe de cohomologie locale $H^d_{\frak m}(R)$ est non nul, et s'injecte dans $H^d_{\frak m}(S_\alpha)$ puisque $R\inj S_\alpha$ est scind\'e. Par passage \`a la colimite, $H^d_{\frak m}(S)$ est non nul. Soit alors $E$ l'enveloppe injective du corps r\'esiduel de $R$. Par dualit\'e locale, $H^d_{\frak m}(S)$ s'identifie \`a $Hom_R(S^\vee, E)$, donc $S^\vee$ est non nul.  
 \end{proof}
 
 L'implication $(c)\Rightarrow (b)$ fournit une preuve tr\`es courte de la conjecture du facteur direct en caract\'eristique $p>0$ \cite[6.2]{H2}, en prenant $r=0$ et  $\sigma$ \'egal \`a l'endomorphisme de Frobenius, qui est plat si $R$ est r\'egulier et fini si $R$ est local complet de corps r\'esiduel parfait.

       \subsection{Presque-puret\'e.}  Soit $(\frak V, \frak m= \frak m^2)$ un cadre tel que $\tilde{\frak m }:= \frak m\otimes_{\frak V} \frak m$ soit plat sur $\frak V$. Soient $R$ une $\frak V$-alg\`ebre, et $N$ un $R$-module. L'adjoint \`a gauche $(\,)_! = \tilde{\frak m }\otimes (\,)_\ast$ de la localisation $(\,)^a$ est exact et commute \`a $\otimes$ \cite[2.2.24, 2.4.35]{GR1}. 
      
      On dit qu'un homomorphisme presque injectif $M\to N$ de $R$-modules est {\it presque pur} si pour tout $R$-module $P$, $P\to P\otimes_R S$ est presque injectif (ici encore, il suffit de tester sur les modules $P$ de pr\'esentation finie). Cela \'equivaut \`a dire que $M^a_!\subset N^a_!$ est pur. Si $M'$ est un module interm\'ediaire (avec $M'\to M$ presque injectif), $M\to M'$ est encore presque pur. 
      
      \begin{lemma}\label{L10} Si $M\subset   N$ est presque pur, alors pour tout $R$-module de pr\'esentation finie $P$, $Hom_R(P, N)\to Hom_R(P, N/M)$ est presque surjectif. En particulier, si $N/M$ est de pr\'esentation finie sur l'anneau $R$ (au sens usuel),  $M$ est alors presque facteur direct de $N$, et donc la classe de $N$ dans le $R$-module $Ext^1(N/M, M)$ est presque nulle. 
      \end{lemma}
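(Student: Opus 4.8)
The plan is to obtain this as an ``almost'' version of Lemma \ref{L5}(1)--(2), by pushing everything through the left adjoint $(\,)_!$ of the localization $(\,)^a$. I would use the facts recalled just above: $(\,)_!$ is exact and commutes with $\otimes$; for an $R$-module $N$ it produces an $R$-module $N^a_!$ together with the counit $N^a_!\to N$, whose localization is an isomorphism and hence whose kernel and cokernel are killed by $\frak m$; and ``$M\subset N$ presque pur'' means exactly that $M^a_!\subset N^a_!$ is a genuine pure submodule.

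The only substantial point to check is a small auxiliary lemma: \emph{for $P$ a finitely presented $R$-module and $K$ an almost-zero $R$-module, $Hom_R(P,K)$ and $Ext^1_R(P,K)$ are almost zero}. For $Hom_R$ this is clear, since $(\varepsilon\varphi)(x)=\varepsilon\varphi(x)\in\frak m K=0$ for $\varepsilon\in\frak m$. For $Ext^1_R$ one computes it from a presentation $R^m\to R^n\to P\to 0$ as a subquotient of $Hom_R(R^n,K)\cong K^n$, which is again killed by $\frak m$. Feeding this into the long exact sequences obtained by factoring the counit $N^a_!\to N$ as a surjection onto its image followed by an inclusion, and using that $\frak m=\frak m^2$ (so that an extension of almost-zero modules by almost-zero modules is almost zero), one deduces that for every finitely presented $P$ the maps $Hom_R(P,N^a_!)\to Hom_R(P,N)$ and $Hom_R(P,(N/M)^a_!)\to Hom_R(P,N/M)$ are almost surjective (in fact almost bijective).

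Granting this, the first assertion follows. Since $M\subset N$ is almost pure, $M^a_!\subset N^a_!$ is an honest pure submodule, and $N^a_!/M^a_!\cong(N/M)^a_!$ by exactness of $(\,)_!$; hence Lemma \ref{L5}(1) yields, for every finitely presented $P$, a surjection $Hom_R(P,N^a_!)\to Hom_R(P,(N/M)^a_!)$. Composing it with the almost surjection $Hom_R(P,(N/M)^a_!)\to Hom_R(P,N/M)$ of the previous paragraph, the composite $Hom_R(P,N^a_!)\to Hom_R(P,N/M)$ is almost surjective; but it factors through $Hom_R(P,N)\to Hom_R(P,N/M)$, which is therefore almost surjective as well.

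For the last assertion I would take $P=N/M$ (finitely presented by hypothesis) in the first assertion: almost surjectivity of $Hom_R(N/M,N)\to Hom_R(N/M,N/M)$ says that $\varepsilon\cdot\mathrm{id}_{N/M}$ lifts to a homomorphism $N/M\to N$ for every $\varepsilon\in\frak m$, i.e. $N\to N/M$ admits an ``almost section'' --- this is the content of ``$M$ presque facteur direct de $N$''. Writing $e=\partial(\mathrm{id}_{N/M})\in Ext^1_R(N/M,M)$ for the class of the extension, $\partial$ being the connecting map attached to $Hom_R(N/M,-)$ and $0\to M\to N\to N/M\to 0$, one gets $\varepsilon e=\partial(\varepsilon\cdot\mathrm{id}_{N/M})=0$ for every $\varepsilon\in\frak m$, so $e$ is almost zero. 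There is no deep obstacle here; the one point deserving care is the auxiliary lemma --- specifically that $Ext^1_R(P,-)$, not merely $Hom_R(P,-)$, kills almost-zero modules for $P$ finitely presented --- together with the routine use of $\frak m=\frak m^2$ needed to upgrade ``liftable after multiplication by a product $\varepsilon_1\varepsilon_2$'' to ``liftable after multiplication by any single $\varepsilon\in\frak m$''. Everything else --- the identification $N^a_!/M^a_!\cong(N/M)^a_!$, the almost-isomorphism $N^a_!\to N$, and the two short diagram chases --- is formal, resting only on exactness of $(\,)_!$ and $(\,)^a$ and on the flatness of $\tilde{\frak m}$ over $\frak V$.
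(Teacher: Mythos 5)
Your argument is correct and follows the same route as the paper's: invoke Lemma~\ref{L5}(1) for the genuine pure inclusion $M^a_!\subset N^a_!$, use exactness of $(\,)_!$ to identify $N^a_!/M^a_!$ with $(N/M)^a_!$, and transfer the conclusion back along the counit $(\,)^a_!\to\mathrm{id}$, which is an almost isomorphism. The paper compresses the transfer step into a single ``donc''; you have simply unwound it, correctly noting that it rests on the vanishing of $Hom_R(P,-)$ and $Ext^1_R(P,-)$ on almost-zero modules for $P$ finitely presented and on $\frak m=\frak m^2$. (A minor slip: in your sketch of the $Ext^1$ vanishing, $Ext^1_R(P,K)$ is a subquotient of $Hom_R(R^m,K)\cong K^m$, not $Hom_R(R^n,K)$, but this does not affect anything.) Your derivation of the almost-vanishing of the extension class via $\varepsilon e=\partial(\varepsilon\cdot\mathrm{id}_{N/M})=0$ is exactly what the paper leaves implicit.
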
 
         
         \begin{proof} D'apr\`es le lemme \ref{L5},  $M\subset N$ est presque pur si et seulement si pour tout $R$-module de pr\'esentation finie $P$, $Hom_R(P, N^a_!)\to Hom_R(P, N^a_!/M^a_!)$ est surjectif,
            donc $Hom_R(P, N)\to Hom_R(P, N/M)$ est presque surjectif. Si $N/M$ est de pr\'esentation finie, on peut prendre $P= N/M$, d'o\`u l'existence, pour tout $\eta\in \frak m$, d'un \'el\'ement $f\in  Hom_R(N/M, N)$ dont la projection dans $End_R(N/M)$ est $\eta\cdot id$. 
         \end{proof} 
 
 Un homomorphisme de $\frak V$-alg\`ebres  $R\to S$ est {\it presque pur} s'il l'est en tant qu'homormorphisme de $R$-modules. 
 Si un compos\'e $R\to S\to T$ est presque pur, il en est de m\^eme de $R\to S$.

        \end{sloppypar}    
  
 \bigskip

 \end{document}